\documentclass{amsart}
\usepackage{amssymb,MnSymbol}
\usepackage{amsthm,amsmath}
\usepackage{epic,cite,xcolor}

\title[Quasitrivial $n$-ary semigroups]{Every quasitrivial $n$-ary semigroup is reducible to a semigroup}

\author[M. Couceiro]{Miguel Couceiro}
\address{Universit\'e de Lorraine, CNRS, Inria, LORIA, F-54000 Nancy, France}
\email{miguel.couceiro[at]\{loria,inria\}.fr }

\author[J. Devillet]{Jimmy Devillet}
\address{Mathematics Research Unit, University of Luxembourg, Maison du Nombre, 6, avenue de la Fonte, L-4364 Esch-sur-Alzette, Luxembourg}
\email{jimmy.devillet[at]uni.lu}

\date{\today}

\hyphenation{qua-si-triv-i-al}

\theoremstyle{plain}
\newtheorem{theorem}{Theorem}[section]
\newtheorem{lemma}[theorem]{Lemma}
\newtheorem{proposition}[theorem]{Proposition}
\newtheorem{corollary}[theorem]{Corollary}
\newtheorem{fact}[theorem]{Fact}

\theoremstyle{definition}
\newtheorem{definition}[theorem]{Definition}

\newtheorem{remark}[theorem]{Remark}

\newcommand{\bfx}{\mathbf{x}}
\newcommand{\bfy}{\mathbf{y}}

\newcommand{\Cdot}{\boldsymbol{\cdot}}

\numberwithin{equation}{section}

\setlength{\unitlength}{8ex}

\begin{document}

\thanks{The second author is supported by the Luxembourg National Research Fund under the project PRIDE 15/10949314/GSM}

\begin{abstract}
We show that every quasitrivial $n$-ary semigroup is reducible to a binary semigroup, and we provide necessary and sufficient conditions for such a reduction to be unique. These results are then refined in the case of symmetric $n$-ary semigroups. We also  explicitly determine the
sizes of these classes when the semigroups are defined on finite sets. As a byproduct of these enumerations, we obtain several new integer sequences.
\end{abstract}
\keywords{quasitrivial polyadic semigroup, reducibility, symmetry, enumeration.}

\subjclass{05A15, 20N15; 16B99, 20M14.}

\maketitle

\section{Introduction}

Let $X$ be a nonempty set and let $n \geq 2$ be an integer. In this paper we are interested in $n$-ary operations $F\colon X^n \to X$ that are \emph{associative}, {{namely}} that satisfy the following system of identities
\begin{multline*}
F(x_1,\ldots,x_{i-1},F(x_i,\ldots,x_{i+n-1}),x_{i+n},\ldots,x_{2n-1}) \\
=~ F(x_1,\ldots,x_i,F(x_{i+1},\ldots,x_{i+n}),x_{i+n+1},\ldots,x_{2n-1})
\end{multline*}
for all $x_1,\ldots,x_{2n-1}\in X$ and all $1\leq i\leq n-1$. This generalisation of associativity was originally proposed by  D\"{o}rnte \cite{Dor28} and studied by Post \cite{Pos40} in the framework of $n$-ary groups and their reductions. An operation $F\colon X^n \to X$ is said to be \emph{reducible to a binary operation} (resp.\ \emph{ternary operation}) if it can be written as a composition of a binary (resp.\ ternary) associative operation (see Definition \ref{de:FH5}).

Recently, the study of reducibility criteria for $n$-ary semigroups {{, that is, a set $X$ endowed with an associative $n$-ary operation,}} gained an increasing interest (see, e.g., \cite{DudMuk06,Ack,DevKiMar17,KiSom18}). In particular, Dudek and Mukhin \cite{DudMuk06} provided necessary and sufficient conditions under which an $n$-ary associative operation is reducible to a binary associative operation. Indeed, they proved (see \cite[Theorem 1]{DudMuk06}) that an associative operation $F\colon X^n \to X$ is reducible to an associative binary operation if and only if one can \emph{adjoin a neutral element $e$ to $X$ for $F$}, that is, there is an $n$-ary associative extension $\tilde{F}\colon \left(X\bigcup\{e\}\right)^n \to X\bigcup\{e\}$ of $F$ such that $e$ is a neutral element for $\tilde{F}$ and $\tilde{F}|_{X^n} = F$. In this case, a binary reduction $G^{e}$ of $F$  can be defined by
\begin{equation*}
G^{e}(x,y)=\tilde{F}(x,\underbrace{e,\ldots,e}_{n-2 ~\text{times}},y) \quad x,y \in X.
\end{equation*}

Recently, Ackerman \cite{Ack} also investigated reducibility criteria for $n$-ary associative operations that are \emph{quasitrivial}, that is, operations that preserve all unary relations: for every $x_1,\ldots,x_n\in X$, {{ $F(x_1,\ldots,x_n)\in\{x_1,\ldots,x_n\}$.}}

{{\begin{remark}
Quasitrivial operations are also called conservative operations \cite{PouRosSto96}. This property has been extensively used in the classification of constraint satisfaction problems into complexity classes (see, e.g, \cite{Bul} and the references therein).
\end{remark}}}

The following result reassembles Corollaries 3.14 and 3.15, and Theorem 3.18 of \cite{Ack}.
\begin{theorem}\label{thm:ack1}
Let $F\colon X^n \to X$ be an associative and quasitrivial operation.
\begin{enumerate}
\item[(a)] If $n$ is even, then $F$ is reducible to an associative and quasitrivial binary operation $G\colon X^2 \to X$.
\item[(b)] If $n$ is odd, then $F$ is reducible to an associative and quasitrivial ternary operation $H\colon X^3 \to X$.
\item[(c)] {{If $n=3$ and $F$ is not reducible to an associative binary operation $G\colon X^2 \to X$, then there exist $a_1,a_2\in X$ with $a_1\neq a_2$ such that
\begin{itemize}
\item $F|_{(X\setminus \{a_1,a_2\})^3}$ is reducible to an associative binary operation.
\item $a_1$ and $a_2$ are neutral elements for $F$.
\end{itemize}}}
\end{enumerate}
\end{theorem}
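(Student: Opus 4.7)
The natural entry point is the Dudek--Mukhin criterion stated in the excerpt: reducibility to a binary associative operation is equivalent to adjoining a neutral element. For part (a), I would therefore either construct an external neutral $e$ and an $n$-ary associative extension $\tilde F$ of $F$ to $X\cup\{e\}$ making $e$ neutral, or, equivalently, propose the binary candidate $G(x,y):=F(x,y,\ldots,y)$ directly. Quasitriviality of $F$ forces $G(x,y)\in\{x,y\}$, so quasitriviality of $G$ is automatic; what remains is to check (i) that $G$ is associative and (ii) that $F$ coincides with the $(n-1)$-fold iterate of $G$. Parity enters because to propagate a neutral element through $\tilde F$ one must fill $n-2$ slots with $e$, and the compatibility of the corresponding reshuffles with the $n$-ary associativity identities is delicate when $n$ is odd.

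For (b), when $n$ is odd the adjoining of a neutral element may fail outright, so the target is instead a ternary reduction. I would use the analogous construction $H(x,y,z):=F(x,y,z,\ldots,z)$ and follow the same program: verify quasitriviality (immediate), associativity of $H$ (from $n$-ary associativity of $F$ by case analysis on how iterated subterms land under quasitriviality), and recover $F$ as an iterate of $H$. The arity drop from $n$ to $3$ preserves odd parity, which is precisely why ternary reduction remains available when binary reduction does not. For part (c), where $n=3$ and binary reduction fails, I would analyse the obstruction directly inside $X$: identify the set $A\subseteq X$ of elements that force the failure of a neutral extension, show via associativity and quasitriviality that each such element must act as a neutral element for $F$, and then bound $|A|\leq 2$ by arguing that three neutral elements in a ternary quasitrivial associative operation would already supply enough algebraic structure to reconstruct a binary reduction on all of $X$. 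Restricting $F$ to $X\setminus A$ then places us back in the setting of (a).

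The main obstacle throughout will be deducing the lower-arity associativity from the $n$-ary associativity system. Concretely, one must expand a left-nested iterate of $G$ (or $H$) into an instance of $F$, apply $n$-ary associativity to regroup, and then contract back, using quasitriviality at each step to decide which argument a given subcomputation returns. For (c), the subtle point is not the existence of an exceptional pair but its exact size: ruling out a third neutral element relies on the specifically ternary combinatorics rather than on a general $n$-ary argument, and is where I expect most of the work to sit.
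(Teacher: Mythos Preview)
The paper does not prove this theorem: it is quoted from Ackerman~\cite{Ack} (Corollaries~3.14, 3.15 and Theorem~3.18 there), so there is no in-paper proof to compare your proposal against. More to the point, one of the paper's main contributions is to show that the hypothesis of part~(c) is never satisfied---Corollary~\ref{cor:main1} establishes that \emph{every} associative quasitrivial $n$-ary operation is reducible to a binary one---so part~(c) is vacuously true, and the work you budget for ``ruling out a third neutral element'' is ultimately unnecessary.

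On your outline itself: the binary candidate $G(x,y)=F(x,(n-1)\Cdot y)$ is exactly the one the paper eventually singles out (Corollary~\ref{cor:need}), but only \emph{after} reducibility has already been established via Theorem~\ref{thm:ack1} and Lemma~\ref{lemma:Dud3}. You correctly flag the hard step as deducing associativity of $G$ (or $H$) from the $n$-ary associativity of $F$, but the sketch gives no mechanism for closing it; ``case analysis on how iterated subterms land under quasitriviality'' is precisely where Ackerman's argument lives, and it is not short. Your plan for~(c)---showing exceptional elements are neutral and bounding their number by two---mirrors in spirit what the paper does in Lemma~\ref{prop:aqred} and Proposition~\ref{prop:arity}(b), but those results in the paper \emph{use} Theorem~\ref{thm:ack1} rather than furnish an independent proof of it.
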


From Theorem \ref{thm:ack1} (c) it would follow that if an associative and quasitrivial operation $F\colon X^n \to X$ is not reducible to an associative binary operation $G\colon X^2 \to X$, then $n$ is odd and there exist distinct $a_1,a_2\in X$ that are neutral elements for $F$.

However, Theorem \ref{thm:ack1} (c) supposes the existence of a ternary associative and quasitrivial operation $H\colon X^3 \to X$ that is not reducible to an associative binary operation, and Ackerman did not provide any example of such an operation.

In this paper we show that there is no associative and quasitrivial $n$-ary operation that is not reducible to an associative binary operation (Corollary \ref{cor:main1}). Hence, for any associative and quasitrivial operation $F\colon X^n \to X$ one can adjoin a neutral element to $X$.
Now this raises the question of whether such a binary reduction is unique and whether it is quasitrivial. We show that both of these properties are  equivalent to the existence of at most one neutral element for the $n$-ary associative and quasitrivial operation (Theorem \ref{thm:unired}). Since an $n$-ary associative and quasitrivial operation has at most one neutral element when $n$ is even or at most two when $n$ is odd (Proposition \ref{prop:arity}), {{in the case when $X$ is finite}}, we also provide several enumeration results
(Propositions \ref{prop:CDM} and \ref{prop:qn}) that explicitly determine the sizes of the corresponding classes of associative and quasitrival $n$-ary operations in terms of the size of the underlying set $X$.  As a by-product, these enumeration results led to several integer sequences that were previously unknown in the Sloane's On-Line Encyclopedia of Integer Sequences (OEIS, see \cite{Slo}). These results are further refined in the case of symmetric operations (Theorem \ref{thm:sym}).

\section{Motivating results}

In this section we recall some basic definitions and present some motivating results. In particular, we show that every associative and quasitrivial operation $F\colon X^n \to X$ is reducible to an associative binary operation (Corollary \ref{cor:main1}).

Throughout this paper let $k \geq 1$ and $x\in X$. We use the shorthand notation $[k] = \{1,\ldots,k\}$ and  $n \Cdot x = x,\ldots,x$ ($n$ times), and we denote the set of all constant $n$-tuples over $X$ by $\Delta^n_X=\{(n\Cdot y)\mid \,y\in X\}.$ {{Also, we denote the size of any set $S$ by $|S|$}}.

Recall that a \emph{neutral element} for $F\colon X^n\to X$ is an element $e_F\in X$ such that
\[
F((i-1)\Cdot e_F,x,(n-i)\Cdot e_F) ~=~ x
\]
for all $x\in X$ and all $i\in [n]$. When the meaning is clear from the context, we may drop the index $F$ and denote a neutral element for $F$ by $e$.

\begin{definition}[see {\cite{Ack,DudMuk06}}]\label{de:FH5}
Let $G\colon X^2\to X$, and $H\colon X^3 \to X$ be associative operations.
\begin{enumerate}
 \item An operation $F\colon X^n\to X$ is said to be \emph{reducible to} $G$ if $F(x_1,\ldots,x_n)=G_{n-1}(x_1,\ldots,x_n)$ for all $x_1,\ldots,x_n\in X$, where $G_1 = G$ and
 \[G_{m}(x_1,\ldots,x_{m+1}) = G_{m-1}(x_1,\ldots,x_{m-1},G(x_m,x_{m+1})),\]
for each integer $2\leq m\leq n-1$. In this case, $G$ is said to be a \emph{binary reduction of $F$}.
\item Similarly, $F$ is said to be \emph{reducible to} $H$ if  $n$ is odd and $F(x_1,\ldots,x_n)=H_{n-3}(x_1,\ldots,x_n)$ for all $x_1,\ldots,x_n\in X$, where $H_0 = H$ and
\[H_{m}(x_1,\ldots,x_{m+3})  =  H_{m-2}(x_1,\ldots,x_m, H(x_{m+1},x_{m+2},x_{m+3})),\]
 for each even integer $2\leq m\leq{n-3}$.
In this case, $H$ is said to be a \emph{ternary reduction of $F$}.
\end{enumerate}
\end{definition}

As we will see, every associative and quasitrivial operation $F\colon X^n \to X$ is reducible to an associative binary operation. To show this, we will make use of the follwing auxiliary result.

\begin{lemma}[see {\cite[Lemma 1]{DudMuk06}}]\label{lemma:Dud3}
If $F\colon X^n\to X$ is associative and has a neutral element $e\in X$, then $F$ is reducible to the associative operation $G^{e}\colon X^2\to X$ defined by
\begin{equation}\label{eq:dud}
G^{e}(x,y)=F(x,(n-2)\Cdot e,y), \text{ for every } x,y \in X.
\end{equation}
\end{lemma}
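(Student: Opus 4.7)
The plan is to verify the two assertions implicit in the statement: that the operation $G^e$ defined by \eqref{eq:dud} is associative, and that $F$ is reducible to $G^e$ in the sense of Definition \ref{de:FH5}. Both follow from combining the $n$-ary associativity identity with the neutral element property of $e$; no deeper structural argument is required.

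For associativity of $G^e$, expand
\[
G^e(G^e(x,y),z) = F\bigl(F(x, (n-2)\Cdot e, y), (n-2)\Cdot e, z\bigr),
\]
which is an outer application of $F$ whose inner $F$ occupies positions $1$ through $n$ of the $(2n-1)$-tuple $(x, (n-2)\Cdot e, y, (n-2)\Cdot e, z)$. Applying $n$-ary associativity $n-1$ times shifts the inner block one position to the right per step, ending at positions $n$ through $2n-1$, which yields $F(x, (n-2)\Cdot e, F(y, (n-2)\Cdot e, z)) = G^e(x, G^e(y,z))$.

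For reducibility, I would establish by induction on $k \in \{1, \ldots, n-1\}$ the stronger identity
\[
G^e_k(x_1, \ldots, x_{k+1}) = F(x_1, \ldots, x_{k+1}, (n-k-1)\Cdot e),
\]
whose specialization at $k = n-1$ gives $G^e_{n-1}(x_1, \ldots, x_n) = F(x_1, \ldots, x_n)$ as required. The base case $k=1$ amounts to verifying $F(x_1, (n-2)\Cdot e, x_2) = F(x_1, x_2, (n-2)\Cdot e)$: rewrite $x_2$ on the left as $F(x_2, (n-1)\Cdot e)$ using neutrality, then apply associativity $n-2$ times to slide the inner $F$ leftward from position $n$ to position $2$ of the unrolled $(2n-1)$-tuple, where it reduces to $F((n-2)\Cdot e, x_2, e) = x_2$. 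For the inductive step, the recursive definition of $G^e_k$ together with the inductive hypothesis gives
\[
G^e_k(x_1, \ldots, x_{k+1}) = F\bigl(x_1, \ldots, x_{k-1}, F(x_k, (n-2)\Cdot e, x_{k+1}), (n-k)\Cdot e\bigr),
\]
and a single associativity shift then moves the inner $F$ to positions $k+1$ through $k+n$ of its unrolled tuple, at which point it collapses to $x_{k+1}$ by neutrality, producing $F(x_1, \ldots, x_{k+1}, (n-k-1)\Cdot e)$.

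The only real obstacle is bookkeeping: carefully tracking how many $e$'s sit in each position after each associativity shift, and confirming that the emerging inner block has exactly one non-$e$ entry so that it collapses via the neutral element property. Once the invariant above is isolated, the proof is essentially automatic.
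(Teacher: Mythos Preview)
Your argument is correct. The paper does not give its own proof of this lemma; it is quoted from \cite[Lemma~1]{DudMuk06} and used as a black box, so there is nothing to compare against beyond noting that your direct verification---associativity of $G^e$ via sliding the inner $F$ across the $(2n-1)$-tuple, and reducibility via the inductive invariant $G^e_k(x_1,\ldots,x_{k+1}) = F(x_1,\ldots,x_{k+1},(n-k-1)\Cdot e)$---is sound and self-contained.
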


{{The following corollary follows from Theorem \ref{thm:ack1} and Lemma \ref{lemma:Dud3}.}}

\begin{corollary}\label{cor:main1}
Every associative and quasitrivial operation $F\colon X^n \to X$ is reducible to an associative binary operation.
\end{corollary}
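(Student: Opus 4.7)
The plan is to proceed by cases according to the parity of $n$, invoking the different parts of Theorem \ref{thm:ack1} in each case, and using Lemma \ref{lemma:Dud3} to dispose of the only potentially problematic subcase.

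If $n$ is even, then Theorem \ref{thm:ack1}(a) immediately delivers an associative (and quasitrivial) binary reduction of $F$, so nothing more is needed.

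If $n$ is odd, I would first apply Theorem \ref{thm:ack1}(b) to obtain an associative and quasitrivial ternary operation $H\colon X^3\to X$ to which $F$ is reducible. The heart of the argument is then to show that $H$ itself is reducible to an associative binary operation, for once this is established, substituting the binary reduction of $H$ inside the ternary reduction of $F$ yields a binary reduction of $F$ (here one uses associativity of the binary operation so that the iterated compositions $G_m$ in Definition \ref{de:FH5} are well-defined and agree regardless of bracketing, making the composition of reductions again a reduction).

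To show $H$ is reducible to an associative binary operation, I apply Theorem \ref{thm:ack1}(c) with $n=3$. Either $H$ is already reducible to an associative binary operation, in which case we are done; or there exist distinct $a_1,a_2\in X$ that are neutral elements for $H$. But in the second case, $H$ has a neutral element, so Lemma \ref{lemma:Dud3} guarantees that $H$ is reducible to the associative binary operation $G^{a_1}\colon X^2\to X$ defined by $G^{a_1}(x,y)=H(x,a_1,y)$. Thus in both subcases $H$, and hence $F$, is reducible to an associative binary operation.

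The only real subtlety is the bookkeeping for composing a ternary reduction with a binary reduction; this is routine using the associativity of the binary operation but should be mentioned explicitly. There is no genuine obstacle, since Theorem \ref{thm:ack1}(c) combined with Lemma \ref{lemma:Dud3} effectively collapses the ``exceptional'' case: the very hypothesis that would make (c) nonvacuous (the existence of neutral elements) is precisely what Lemma \ref{lemma:Dud3} needs in order to furnish the desired binary reduction.
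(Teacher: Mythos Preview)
Your proof is correct and follows essentially the same route the paper intends: the paper's own proof is the single line ``follows from Theorem~\ref{thm:ack1} and Lemma~\ref{lemma:Dud3},'' and you have faithfully unfolded this into the even/odd case split, using part~(c) to produce a neutral element for the ternary reduction $H$ and then Lemma~\ref{lemma:Dud3} to collapse the apparently exceptional case. Your remark about composing the binary reduction of $H$ with the ternary reduction of $F$ is the one piece of bookkeeping the paper leaves implicit, and your justification via associativity of $G$ is the right one.
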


{{Theorem \ref{thm:ack1}(c) states that a ternary associative and quasitrivial operation $F\colon X^3\to X$ must have two neutral elements, whenever it is not reducible to a binary operation.}} In particular, we can show that two distinct elements $a_1,a_2\in X$ are neutral elements for $F$ if and only if they are neutral elements for the restriction $F|_{\{a_1,a_2\}^3}$ of $F$ to $\{a_1,a_2\}^3$. Indeed, the condition is obviously necessary, while its sufficiency  follows from the Lemma \ref{prop:aqred} below.

\begin{lemma}\label{prop:aqred}
Let $H\colon X^3 \to X$ be an associative and quasitrivial operation.
\begin{enumerate}
\item[(a)] If $a_1,a_2\in X$  are two distinct neutral elements for $H|_{\{a_1,a_2\}^3}$, then
\[
H(a_1,a_1,x) = H(x,a_1,a_1) = x = H(x,a_2,a_2) = H(a_2,a_2,x), \quad x\in X.
\]
\item[(b)] If $a_1,a_2\in X$ are {{two}} distinct neutral elements for $H|_{\{a_1,a_2\}^3}$, then both $a_1$ and $a_2$ are neutral elements for $H$.
\end{enumerate}
\end{lemma}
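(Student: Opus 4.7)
The plan is to prove both parts by using ternary associativity together with the neutrality of $a_1,a_2$ on the restriction $\{a_1,a_2\}^3$ to force a contradiction with quasitriviality whenever $H$ deviates from the desired value; the ``swap'' identities $H(a_1,a_2,a_1)=a_2$ and $H(a_2,a_1,a_2)=a_1$ (provided by the hypothesis) play a pivotal role throughout.

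For part (a), I would focus on $H(a_1,a_1,x)=x$, since the three remaining identities follow by analogous arguments after swapping $a_1\leftrightarrow a_2$ or permuting positions. If $x\in\{a_1,a_2\}$ the identity is immediate from the hypothesis on $\{a_1,a_2\}^3$; otherwise quasitriviality forces $H(a_1,a_1,x)\in\{a_1,x\}$. Supposing $H(a_1,a_1,x)=a_1$, associativity yields
\[
H(a_2,a_2,x)\,=\,H(a_2,H(a_2,a_1,a_1),x)\,=\,H(a_2,a_2,H(a_1,a_1,x))\,=\,H(a_2,a_2,a_1)\,=\,a_1,
\]
using $H(a_2,a_1,a_1)=a_2$ and $H(a_2,a_2,a_1)=a_1$ from the hypothesis. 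This contradicts $H(a_2,a_2,x)\in\{a_2,x\}$, so $H(a_1,a_1,x)=x$.

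For part (b), part (a) already covers neutrality of $a_1$ in positions $1$ and $3$, so the only missing identity is $H(a_1,x,a_1)=x$ (with the case $x\in\{a_1,a_2\}$ being immediate). I would proceed in three steps. \emph{Step one:} establish the cross identities $H(a_1,a_2,x)=H(a_2,a_1,x)=H(x,a_1,a_2)=H(x,a_2,a_1)=x$ for $x\notin\{a_1,a_2\}$. Indeed, associativity gives $H(a_1,a_2,H(a_1,a_2,x))=H(H(a_1,a_2,a_1),a_2,x)=H(a_2,a_2,x)=x$; writing $y=H(a_1,a_2,x)\in\{a_1,a_2,x\}$, the cases $y=a_1$ and $y=a_2$ force $x=a_2$ and $x=a_1$ respectively (using $H(a_1,a_2,a_1)=a_2$ and $H(a_1,a_2,a_2)=a_1$), contradicting $x\notin\{a_1,a_2\}$; hence $y=x$. \emph{Step two:} combining step one with part (a), associativity yields
\[
H(a_1,x,a_1)\,=\,H(H(a_1,x,a_1),a_2,a_2)\,=\,H(a_1,H(x,a_1,a_2),a_2)\,=\,H(a_1,x,a_2),
\]
and symmetrically $H(a_2,x,a_2)=H(a_2,a_2,H(a_1,x,a_2))=H(a_2,H(a_2,a_1,x),a_2)=H(a_1,x,a_2)$, so $H(a_1,x,a_1)=H(a_2,x,a_2)$. \emph{Step three:} quasitriviality gives $H(a_1,x,a_1)\in\{a_1,x\}$ and $H(a_2,x,a_2)\in\{a_2,x\}$; since these values coincide, they both lie in $\{a_1,x\}\cap\{a_2,x\}=\{x\}$ (using $a_1\neq a_2$), whence $H(a_1,x,a_1)=H(a_2,x,a_2)=x$.

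The main obstacle is step two: applying part (a) or associativity directly to $H(a_1,x,a_1)$ produces only tautologies, and one genuinely needs the cross identities of step one (built on the swap relations on the restriction) to move the outer $a_1,a_2$ past the middle slot. Once the chain $H(a_1,x,a_1)=H(a_1,x,a_2)=H(a_2,x,a_2)$ is available, the intersection argument in step three closes the proof cleanly, and an identical cascade shows $a_2$ is neutral for $H$.
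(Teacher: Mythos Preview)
Your proof of (a) is essentially the paper's: both assume $H(a_1,a_1,x)=a_1$, insert a neutral triple, and shift by associativity to conclude $a_1=H(a_2,a_2,x)\in\{a_2,x\}$.

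For (b) your route is genuinely different. The paper argues by contradiction: assuming $H(a_1,x,a_1)=a_1$ for some $x$, it splits into the two subcases $H(a_2,x,a_2)=x$ and $H(a_2,x,a_2)=a_2$ and, in each, produces a fairly long associativity chain ending in a contradiction. You instead prove directly that $H(a_1,x,a_1)=H(a_1,x,a_2)=H(a_2,x,a_2)$ by routing through the ``cross'' identities $H(x,a_1,a_2)=H(a_2,a_1,x)=x$ of your step one, and then finish with the clean intersection argument $\{a_1,x\}\cap\{a_2,x\}=\{x\}$. Your argument avoids the case split entirely and is shorter; the cost is the auxiliary step one, but that is a natural companion to part (a) and is proved by the same mechanism. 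One cosmetic glitch: in the second chain of step two the endpoints are transposed --- the intended order is $H(a_1,x,a_2)=H(a_2,a_2,H(a_1,x,a_2))=H(a_2,H(a_2,a_1,x),a_2)=H(a_2,x,a_2)$ --- but all four expressions are indeed equal, so the substance is unaffected.
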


\begin{proof}
\begin{enumerate}
\item[(a)] Let $x\in X$. We only show that $H(a_1,a_1,x) = x$, since the other equalities can be shown similarly. Clearly, the equality holds when $x\in \{a_1,a_2\}$. So let $x\in X\setminus \{a_1,a_2\}$ and, for a contradiction, suppose that $H(a_1,a_1,x) = a_1$. By the associativity and quasitriviality of $H$, we then have
\begin{align*}
a_1 ~&=~ H(a_1,a_1,x) ~ = ~ H(a_1,H(a_1,a_2,a_2),x) \\
    ~&=~ H(H(a_1,a_1,a_2),a_2,x) ~ = ~ H(a_2,a_2,x) \in \{a_2,x\},
\end{align*}
which contradicts the fact that $a_1, a_2$ and $x$ are pairwise distinct.

\item[(b)] Suppose to the contrary that $a_1$ is not a neutral element for $H$ (the other case can be dealt with similarly). By Lemma \ref{prop:aqred}(a) we have that $H(a_1,a_1,y) = H(y,a_1,a_1) = y$ for all $y\in X$. By assumption, there exists $x\in X\setminus \{a_1,a_2\}$ such that $H(a_1,x,a_1) = a_1$. We have two cases to consider.
\begin{itemize}
\item If $H(a_2,x,a_2) = x$, then by Lemma \ref{prop:aqred}(a) we have that
\begin{align*}
H(x,a_2,a_1) ~&=~ H(H(x,a_1,a_1),a_2,a_1) ~ = ~ H(x,a_1,H(a_1,a_2,a_1)) \\
             ~&=~ H(x,a_1,a_2) ~ = ~ H(H(a_1,a_1,x),a_1,a_2) \\
             ~&=~ H(a_1,H(a_1,x,a_1),a_2) ~ = ~ H(a_1,a_1,a_2) ~ = ~ a_2.
\end{align*}
Also, by Lemma \ref{prop:aqred}(a) we have that
\begin{align*}
x ~&=~ H(x,a_1,a_1) ~=~ H(H(a_2,x,a_2),a_1,a_1) \\
  ~&=~ H(a_2,H(x,a_2,a_1),a_1) ~=~ H(a_2,a_2,a_1) ~=~ a_1,
\end{align*}
which contradicts the fact that $x\neq a_1$.
\item If $H(a_2,x,a_2) = a_2$, then by Lemma \ref{prop:aqred}(a) we have that
\begin{align*}
H(x,x,a_2) ~&=~ H(x,H(a_2,a_2,x),a_2) \\
           ~&=~ H(x,a_2,H(a_2,x,a_2)) ~ = ~ H(x,a_2,a_2) ~ = ~ x,
\end{align*}
and
\begin{align*}
H(a_1,x,x) ~&=~ H(a_1,H(x,a_1,a_1),x) \\
           ~&=~ H(H(a_1,x,a_1),a_1,x) ~ = ~ H(a_1,a_1,x) ~ = ~ x.
\end{align*}
By Lemma \ref{prop:aqred}(a) we also have that
\begin{align*}
x ~&=~ H(x,a_2,a_2) ~=~ H(H(a_1,x,x),a_2,a_2) \\
  ~&=~ H(a_1,H(x,x,a_2),a_2) ~=~ H(a_1,x,a_2) \\
  ~&=~ H(a_1,H(x,a_1,a_1),a_2) ~=~ H(H(a_1,x,a_1),a_1,a_2) \\
  ~&=~ H(a_1,a_1,a_2) ~=~ a_2,
\end{align*}
which contradicts the fact that $x\neq a_2$.\qedhere
\end{itemize}
\end{enumerate}
\end{proof}

We now present some geometric considerations of quasitrivial operations.
The {\it preimage} of an element $x\in X$ under an operation $F\colon X^n\to X$ is denoted by $F^{-1}[x]$. When $X$ is finite, {{namely}} $X=[k]$, we also define the \emph{preimage sequence of $F$} as the nondecreasing $k$-element sequence of the numbers $|F^{-1}[x]|$, $x\in [k]$. We denote this sequence by $|F^{-1}|$.

Recall that the \emph{kernel} of an operation $F\colon [k]^n\to [k]$ is the equivalence relation
$
\mathrm{Ker}(F) = \{\{\bfx,\bfy\}\mid F(\bfx)=F(\bfy)\}.
$
The \emph{contour plot} of $F\colon [k]^n\to [k]$ is the undirected graph
$\mathcal{C}_F = ([k]^n,E)$, where $E$ is the non-reflexive part of
$\mathrm{Ker}(F)$, {{that is}},
$
E = \{\{\bfx,\bfy\}\mid\bfx\neq\bfy~\text{and}~F(\bfx)=F(\bfy)\}.
$
We say that two tuples $\bfx,\bfy\in [k]^n$ are \emph{$F$-connected} (or simply \emph{connected}) if $\{\bfx,\bfy\}\in \mathrm{Ker}(F)$.

{{By an idempotent operation, we mean an operation $F\colon X^n \to X$ that satisfies $F(n\Cdot x)=x$ for all $x\in X$.}}

\begin{lemma}\label{lem:graph}
An operation $F \colon [k]^n \to [k]$ is quasitrivial if and only if it is idempotent and each $(x_1,...,x_n) \in [k]^n \setminus \Delta^n_{[k]}$ is connected to some $(n\Cdot x)\in \Delta^n_{[k]}$.
\end{lemma}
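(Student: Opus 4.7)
The plan is to verify both implications directly from the definitions, reading the connectivity condition in its intended sense: the repeated entry $x$ of the constant tuple $(n\Cdot x)$ to which $(x_1,\ldots,x_n)$ is connected belongs to $\{x_1,\ldots,x_n\}$. (Without this reading the condition would be trivially implied by idempotency, since $F(\mathbf{x})=F(n\Cdot F(\mathbf{x}))$ whenever $F$ is idempotent, and the equivalence would fail.)

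For the forward direction, I would assume $F$ is quasitrivial. Applying quasitriviality to the constant tuple $(n\Cdot x)$ gives $F(n\Cdot x)\in\{x\}$, hence $F(n\Cdot x)=x$, so $F$ is idempotent. Now let $(x_1,\ldots,x_n)\in[k]^n\setminus\Delta^n_{[k]}$. Quasitriviality yields $F(x_1,\ldots,x_n)=x_j$ for some $j\in[n]$, and by the idempotency just established, $F(n\Cdot x_j)=x_j=F(x_1,\ldots,x_n)$. Therefore $\{(x_1,\ldots,x_n),(n\Cdot x_j)\}\in\mathrm{Ker}(F)$, i.e.\ $(x_1,\ldots,x_n)$ is $F$-connected to the constant tuple $(n\Cdot x_j)$ with $x_j\in\{x_1,\ldots,x_n\}$.

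For the backward direction, I would assume $F$ is idempotent and that each non-constant tuple is $F$-connected to some $(n\Cdot x)$ with $x\in\{x_1,\ldots,x_n\}$. Pick any $(x_1,\ldots,x_n)\in[k]^n$. If $(x_1,\ldots,x_n)\in\Delta^n_{[k]}$, say $(x_1,\ldots,x_n)=(n\Cdot x)$, then idempotency gives $F(x_1,\ldots,x_n)=x\in\{x_1,\ldots,x_n\}$. Otherwise the hypothesis provides some $x\in\{x_1,\ldots,x_n\}$ with $F(x_1,\ldots,x_n)=F(n\Cdot x)$, and idempotency gives $F(n\Cdot x)=x$. In either case $F(x_1,\ldots,x_n)\in\{x_1,\ldots,x_n\}$, so $F$ is quasitrivial.

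The only real obstacle is pinning down the intended meaning of the connectivity condition; once that is fixed, both directions reduce to a one-line manipulation using only the definitions of quasitriviality, idempotency, and the kernel of $F$.
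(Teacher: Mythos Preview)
Your proof is correct and mirrors the paper's own argument, which is a one-line restatement: ``$F$ is quasitrivial if and only if it is idempotent and for any $(x_1,\ldots,x_n)\in[k]^n\setminus\Delta^n_{[k]}$ there exists $i\in\{1,\ldots,n\}$ such that $F(x_1,\ldots,x_n)=x_i=F(n\Cdot x_i)$.'' Your explicit observation that the constant tuple must have its repeated entry among $\{x_1,\ldots,x_n\}$ is exactly the reading the paper adopts (and is confirmed by how the lemma is later applied in the paper), so there is no discrepancy in approach.
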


\begin{proof}
Clearly, $F$ is quasitrivial if and only if it is idempotent and for any $(x_1,...,x_n) \in [k]^n\setminus \Delta^n_{[k]}$ there exists $i \in \{1,...,n\}$ such that $F(x_1,...,x_n) = x_i = F(n \Cdot x_i)$.
\end{proof}

In the sequel we shall make use of the following two lemmas.

\begin{lemma}\label{lem:easy}
For each $x\in [k]$, the number of tuples $(x_1,\ldots,x_n)\in [k]^n$ with at least one component equal to $x$ is given by $k^n-(k-1)^n$.
\end{lemma}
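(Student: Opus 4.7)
The plan is to use complementary counting, which is the standard route for ``at least one'' statements in finite combinatorics. First I would observe that the total number of $n$-tuples $(x_1,\ldots,x_n)\in [k]^n$ is $k^n$, since each of the $n$ coordinates is chosen independently from a set of size $k$.

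Next I would count the complementary event, namely the $n$-tuples in which \emph{no} component is equal to the fixed element $x$. Such tuples are precisely the elements of $([k]\setminus\{x\})^n$, and since $|[k]\setminus\{x\}| = k-1$, there are exactly $(k-1)^n$ of them. Subtracting yields $k^n - (k-1)^n$, as claimed.

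There is no real obstacle here; the only thing to be careful about is to phrase the argument in terms of the complement rather than trying an inclusion--exclusion over the $n$ coordinate positions (which would give the same answer after simplification but is needlessly heavy). Given how short the argument is, I would expect the authors to write a one-line proof along exactly these lines.
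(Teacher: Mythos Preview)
Your proposal is correct and matches the paper's own proof essentially verbatim: the authors identify the tuples in question with $[k]^n \setminus ([k]\setminus\{x\})^n$ and take the difference of cardinalities. There is nothing to add.
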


\begin{proof}
Let $x\in [k]$. The set of tuples in $[k]^n$ with at least one component equal to $x$ is the set $[k]^n \setminus ([k] \setminus \{x\})^n$, and its cardinality is $k^n- (k - 1)^n$ since $([k] \setminus \{x\})^n \subseteq [k]^n$.
\end{proof}

\begin{lemma}\label{lem:deg}
Let $F\colon [k]^n\to [k]$ be a quasitrivial operation. Then, for each $x \in [k]$, we have $|F^{-1}[x]| \leq k^n-(k-1)^n$.
\end{lemma}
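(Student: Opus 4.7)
The plan is to combine quasitriviality with the counting identity provided by Lemma \ref{lem:easy}. Fix $x \in [k]$ and consider any tuple $(x_1,\ldots,x_n) \in F^{-1}[x]$. By quasitriviality of $F$, we have $F(x_1,\ldots,x_n) \in \{x_1,\ldots,x_n\}$, so $x = F(x_1,\ldots,x_n)$ forces $x \in \{x_1,\ldots,x_n\}$; in other words, at least one coordinate of the tuple equals $x$.

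Hence $F^{-1}[x]$ is contained in the set of tuples in $[k]^n$ having at least one component equal to $x$. Applying Lemma \ref{lem:easy} to this containing set gives the bound $|F^{-1}[x]| \leq k^n - (k-1)^n$, which is exactly the inequality to prove.

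There is essentially no obstacle here: the argument is a one-line set inclusion followed by a cardinality count. The only subtlety worth noting is that we do not need idempotence or the connectedness part of Lemma \ref{lem:graph}; the pure quasitriviality condition $F(x_1,\ldots,x_n) \in \{x_1,\ldots,x_n\}$ already yields the necessary restriction on where preimages can live.
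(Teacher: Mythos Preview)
Your proof is correct and follows essentially the same idea as the paper's: quasitriviality forces every tuple in $F^{-1}[x]$ to have at least one coordinate equal to $x$, and then Lemma~\ref{lem:easy} bounds the number of such tuples. The only difference is that the paper phrases this through the contour-plot language of Lemma~\ref{lem:graph} (idempotence plus connectedness to a diagonal point), whereas you work directly from the definition of quasitriviality; your route is slightly more streamlined, as you yourself note.
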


\begin{proof}
Let $x \in [k]$. Since $F\colon [k]^n\to [k]$ is quasitrivial, it follows from Lemma \ref{lem:graph} that the point $(n\Cdot x)$ is  at most connected to all $(x_1,...,x_n) \in [k]^n$ with at least one component equal to $x$. By Lemma \ref{lem:easy}, we conclude that there are exactly $k^n-(k-1)^n$ such points.
\end{proof}

Recall that an element $z\in X$ is said to be an \emph{annihilator} for $F$ if {{$F(x_1,...,x_n) = z$,}}
whenever $(x_1,...,x_n)\in X^n$ has at least one component equal to $z$.

\begin{remark}
A neutral element need not be unique when $n\geq 3$ (for instance, $F(x_1,x_2,x_3)\equiv x_1+x_2+x_3~(\mathrm{mod}~2)$ on $X=\mathbb{Z}_2$). However, if an annihilator exists, then it is unique.
\end{remark}

\begin{proposition}\label{prop:an}
Let $F\colon [k]^n\to [k]$ be a quasitrivial operation and let $z \in [k]$. Then $z$ is an annihilator if and only if $|F^{-1}[z]| = k^n-(k-1)^n$.
\end{proposition}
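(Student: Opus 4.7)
The plan is to observe that both implications hinge on a single set inclusion: by quasitriviality, $F^{-1}[z]$ is always contained in the set $T_z = \{(x_1,\ldots,x_n)\in[k]^n : x_i = z \text{ for some } i\in[n]\}$, and by Lemma \ref{lem:easy} this set $T_z$ has cardinality exactly $k^n-(k-1)^n$. So the proposition amounts to showing that $z$ is an annihilator iff the inclusion $F^{-1}[z] \subseteq T_z$ is in fact an equality.

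For the necessity direction, I would assume that $z$ is an annihilator. Then by the very definition of an annihilator, every tuple in $T_z$ is mapped to $z$, giving $T_z \subseteq F^{-1}[z]$. Combined with the inclusion $F^{-1}[z] \subseteq T_z$ (which follows from quasitriviality, since $F(x_1,\ldots,x_n)\in\{x_1,\ldots,x_n\}$ forces $z\in\{x_1,\ldots,x_n\}$ whenever $F(x_1,\ldots,x_n)=z$), we get $F^{-1}[z]=T_z$, hence $|F^{-1}[z]|=k^n-(k-1)^n$ by Lemma \ref{lem:easy}.

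For the sufficiency direction, I would assume $|F^{-1}[z]|=k^n-(k-1)^n$. Quasitriviality still gives $F^{-1}[z]\subseteq T_z$, and since both sets are finite with the same cardinality, we conclude $F^{-1}[z]=T_z$. Unpacking this equality yields $F(x_1,\ldots,x_n)=z$ for every tuple $(x_1,\ldots,x_n)$ having at least one component equal to $z$, which is precisely the definition of $z$ being an annihilator.

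There is no real obstacle here: the argument is a two-line counting observation once one notices that quasitriviality already contributes the harder inclusion and that Lemma \ref{lem:easy} pins down the size of the ambient set $T_z$. The only thing to be careful about is to cite Lemma \ref{lem:graph} (or directly the definition of quasitriviality) to justify $F^{-1}[z]\subseteq T_z$, rather than invoking Lemma \ref{lem:deg}, which only gives the cardinality bound and not the set-theoretic inclusion needed for the sufficiency direction.
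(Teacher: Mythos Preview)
Your proposal is correct and takes essentially the same approach as the paper: both arguments rest on the observation that quasitriviality forces $F^{-1}[z]\subseteq T_z$ (the paper packages this via Lemma~\ref{lem:graph} and Lemma~\ref{lem:deg}, you invoke the definition directly), and then Lemma~\ref{lem:easy} converts the annihilator condition into the cardinality statement. The only cosmetic difference is that the paper phrases the argument in the graph-theoretic language of ``connected points'' from the contour plot, whereas your set-inclusion framing is slightly more direct.
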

\begin{proof}
(Necessity) If $z$ is an annihilator, then we know that $F(i\Cdot z,x_{i+1},...,x_n) = z$ for all $i \in [n]$, all $x_{i+1},...,x_n \in [k]$ and all permutations of $(i\Cdot z,x_{i+1},...,x_n)$. Thus, by Lemma \ref{lem:easy} $(n\Cdot z)$ is connected to $k^n-(k-1)^n$ points. Finally, using Lemma \ref{lem:deg} we get $|F^{-1}[z]| = k^n-(k-1)^n$.

(Sufficiency) If $|F^{-1}[z]| = k^n-(k-1)^n$, then by Lemmas \ref{lem:graph} and \ref{lem:easy} we have that $(n\Cdot z)$ is connected to the $k^n-(k-1)^n$ points $(x_1,...,x_n) \in [k]^n$ containing at least one component equal to $z$. Thus, we have $F(i\Cdot z,x_{i+1},...,x_n) = z$ for all $i \in [n]$, all $x_{i+1},...,x_n \in [k]$ and all permutations of $(i\Cdot z,x_{i+1},...,x_n)$, which shows that $z$ is an annihilator.
\end{proof}

\begin{remark}
By Proposition \ref{prop:an}, if $F\colon [k]^n\to [k]$ is quasitrivial, then each element $x$ such that $|F^{-1}[x]| = k^n-(k-1)^n$ is unique.
\end{remark}

\section{Criteria for unique reductions and some enumeration results}

In this section we show that an associative and quasitrivial operation $F\colon X^n \to X$ is uniquely reducible to an associative and quasitrivial binary operation if and only if $F$ has at most one neutral element (Theorem \ref{thm:unired}). We also enumerate the class of associative and quasitrivial $n$-ary operations, which leads to a previously unknown sequence in the OEIS {{\cite{Slo}}} (Proposition \ref{prop:qn}).
Let us  first recall a useful result from \cite{DevKiMar17}.

\begin{lemma}[see {\cite[Proposition 3.5]{DevKiMar17}}]\label{lem:uni}
Assume that the operation $F\colon X^n\to X$ is associative and reducible to associative binary operations $G\colon X^2\to X$ and $G'\colon X^2\to X$. If $G$ and $G'$ are idempotent or have the same neutral element, then $G=G'$.
\end{lemma}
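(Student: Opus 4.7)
The plan is to recover $G$ (and independently $G'$) from $F$ by plugging cleverly chosen tuples into the reduction formula, so that the two recovered operations must coincide. Unrolling the recursion in Definition~\ref{de:FH5}(1), reducibility of $F$ to $G$ means
\[
F(x_1,\ldots,x_n) \;=\; G\bigl(x_1, G(x_2, G(\cdots G(x_{n-1},x_n)\cdots))\bigr),
\]
and the analogous identity holds with $G'$ in place of $G$. Each hypothesis will let me collapse the nested inner expression down to a single letter.

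First, if $G$ and $G'$ share a neutral element $e$, I would substitute $x_2=\cdots=x_{n-1}=e$ in the displayed identity; since $G(e,z)=z$, a straightforward induction on the nesting depth shows that the inner $n-2$ layers peel off, yielding $F(x,(n-2)\Cdot e,y)=G(x,y)$. Running the identical computation with $G'$ in place of $G$ gives $F(x,(n-2)\Cdot e,y)=G'(x,y)$, so $G(x,y)=G'(x,y)$ for all $x,y\in X$.

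Second, if $G$ and $G'$ are both idempotent, I would substitute $x_2=\cdots=x_n=y$. The innermost term is $G(y,y)=y$, and iterating idempotence from the inside out collapses every layer, producing $F(x,(n-1)\Cdot y)=G(x,y)$. The same computation applied to $G'$ yields $F(x,(n-1)\Cdot y)=G'(x,y)$, and again $G=G'$.

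I do not foresee a real obstacle: associativity of $G$ and $G'$ is not actually invoked in the argument beyond what is already built into Definition~\ref{de:FH5}(1), which fixes a single bracketing of the reduction. The only care needed is bookkeeping the number of $e$'s (respectively $y$'s) and verifying by induction on the nesting depth that the collapse of the inner $G$-tower is valid in each case.
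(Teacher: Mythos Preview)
Your argument is correct. In each disjunct you recover $G$ and $G'$ from $F$ via the same substitution, which forces them to coincide: with a common neutral element $e$ you get $G(x,y)=F(x,(n-2)\Cdot e,y)=G'(x,y)$, and with idempotence you get $G(x,y)=F(x,(n-1)\Cdot y)=G'(x,y)$. The inductive collapse of the nested tower is routine in both cases, and your remark that associativity of $G,G'$ plays no role beyond the framing of Definition~\ref{de:FH5} is accurate.

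There is nothing to compare against here: the paper does not supply its own proof of this lemma but simply quotes it from \cite[Proposition~3.5]{DevKiMar17}. Your write-up is essentially the natural proof of that proposition; in particular, the neutral-element branch is just the computation behind Lemma~\ref{lemma:Dud3} applied to both $G$ and $G'$, and the idempotent branch is the analogous collapse that also underlies the formula $G(x,y)=F(x,(n-1)\Cdot y)$ appearing in Corollary~\ref{cor:need}.
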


From Lemma~\ref{lem:uni}, we immediately get a necessary and sufficient condition that guarantees unique reductions for associative operation that have a neutral element.

\begin{corollary}\label{cor:triv}
Let $F\colon X^n\to X$ be an associative operation that is reducible to associative binary operations $G\colon X^2\to X$ and $G'\colon X^2\to X$ that have neutral elements. Then, $G=G'$ if and only if $G$ and $G'$ have the same neutral element.
\end{corollary}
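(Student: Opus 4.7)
The plan is to observe that this corollary is essentially a direct repackaging of Lemma \ref{lem:uni}, together with the well-known fact that a binary operation admits at most one neutral element. Hence I expect no genuine obstacle; the two directions will be handled separately, with the nontrivial content concentrated in the sufficiency direction.

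For the necessity direction, I would argue as follows. Suppose $G=G'$. Let $e$ be a neutral element of $G$ and $e'$ a neutral element of $G'$. Since $G=G'$, both $e$ and $e'$ are neutral elements of $G$. Then the standard one-line calculation $e=G(e,e')=e'$ (using that $e'$ is neutral on the right and $e$ is neutral on the left) shows $e=e'$. So $G$ and $G'$ share the same neutral element.

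For the sufficiency direction, suppose that $G$ and $G'$ have a common neutral element. Since $F$ is associative and reducible to both $G$ and $G'$, and since $G$ and $G'$ are associative binary operations sharing a neutral element, Lemma \ref{lem:uni} applies directly and yields $G=G'$.

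The only step that could deserve a brief comment is the uniqueness of the neutral element, but this is entirely standard and requires no input from the quasitriviality or $n$-ary associativity hypotheses of the surrounding theory. Consequently, the proof amounts to citing Lemma \ref{lem:uni} in one direction and invoking the uniqueness of neutral elements in the other.
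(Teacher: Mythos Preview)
Your proposal is correct and matches the paper's approach: the paper states that the corollary follows immediately from Lemma~\ref{lem:uni}, which is exactly your sufficiency direction, while your necessity direction supplies the routine uniqueness-of-neutral-element argument that the paper leaves implicit.
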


Using Lemma \ref{lemma:Dud3}, Corollary \ref{cor:triv}, and observing that
\begin{enumerate}
\item[(i)] a binary associative operation has at most one neutral element, \item[(ii)] the neutral element of a binary reduction $G\colon X^2\to X$ of an associative operation $F\colon X^n\to X$ is also a neutral element for $F$, and
\item[(iii)] if $e$ is a neutral element for an associative operation $F\colon X^n\to X$ and $G\colon X^2\to X$ is a reduction of $F$, then  $G_{n-2}((n-1)\Cdot e)$ {{(see Definition \ref{de:FH5})}} is the neutral element for $G$,
\end{enumerate}
we can generalise Corollary \ref{cor:triv} as follows.

\begin{proposition}\label{prop:unired2}
Let $F\colon X^n\to X$ be an associative operation, and let $E_F$ be the set of its neutral elements and $R_F$ of its binary reductions. {{If $E_F\neq\varnothing$, then for any $G\in R_F$, there exists $e\in E_F$ such that $G=G^e$. Moreover, the mapping $\sigma\colon E_F\to R_F$ defined by $\sigma(e)=G^e$ is a bijection.}} In particular, $e$ is the unique neutral element for $F$ if and only if
$G^{e}$ is the unique binary reduction of $F$.
\end{proposition}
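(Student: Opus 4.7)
My plan is to show that $\sigma\colon e\mapsto G^e$ is a bijection between $E_F$ and $R_F$ by exploiting observations~(i)--(iii) together with Corollary~\ref{cor:triv}. The guiding idea is that every binary reduction in $R_F$ is uniquely determined by its own neutral element, and $\sigma$ realises the inverse of the ``take the neutral element'' assignment.

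First I would verify that $\sigma$ is well defined. For each $e\in E_F$, Lemma~\ref{lemma:Dud3} guarantees that $G^e\in R_F$, and a short direct computation from \eqref{eq:dud} shows that $e$ is itself a neutral element for $G^e$: one checks $G^e(x,e)=F(x,(n-1)\Cdot e)=x$ and $G^e(e,x)=F((n-1)\Cdot e,x)=x$ using the neutrality of $e$ for $F$. In particular, $R_F\neq\varnothing$ whenever $E_F\neq\varnothing$.

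For surjectivity, I would fix any $e\in E_F$ and take an arbitrary $G\in R_F$. By observation~(iii), the element $e':=G_{n-2}((n-1)\Cdot e)$ is a neutral element for $G$; by observation~(ii), $e'$ is then also a neutral element for $F$, so $e'\in E_F$. Since the first step shows that $G^{e'}\in R_F$ also admits $e'$ as a neutral element, Corollary~\ref{cor:triv} applied to the two binary reductions $G$ and $G^{e'}$ with common neutral element $e'$ forces $G=G^{e'}=\sigma(e')$.

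Injectivity is then immediate: if $G^e=G^{e'}$, this common binary associative operation admits both $e$ and $e'$ as neutral elements, and observation~(i) (at most one neutral element for a binary associative operation) yields $e=e'$. The ``in particular'' statement follows from $|E_F|=|R_F|$. The main obstacle I anticipate is the careful justification of observation~(iii), which requires a small induction on the reduction formula of Definition~\ref{de:FH5}, exploiting the absorbing behaviour of $n-1$ consecutive $e$'s inside $F$; once this is nailed down, the rest of the argument is essentially bookkeeping through Corollary~\ref{cor:triv}.
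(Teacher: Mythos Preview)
Your proposal is correct and follows essentially the same route as the paper: the paper does not write out a separate proof but instead introduces the proposition as a direct consequence of Lemma~\ref{lemma:Dud3}, Corollary~\ref{cor:triv}, and the three observations (i)--(iii), which is exactly the combination you assemble. Your write-up simply makes explicit the bookkeeping the paper leaves implicit, including the verification that $e$ is neutral for $G^e$ and the use of (iii) then (ii) then Corollary~\ref{cor:triv} for surjectivity and (i) for injectivity.
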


{{As we will see in Proposition \ref{prop:arity}, the size of $E_F$, and thus
of $R_F$, is at most 2 whenever $F$ is quasitrivial.}}

Let {{$Q_1^2(X)$}} denote the class of associative and quasitrivial operations $G\colon X^2 \to X$ that have exactly one neutral element, and let {{$A_1^2(X)$}} denote the class of associative operations $G\colon X^2 \to X$ that have exactly one neutral element $e_G\in X$ and that satisfy the following conditions:
\begin{itemize}
\item $G(x,x)\in\{e_G,x\}$ for all $x\in X$,
\item $G(x,y)\in \{x,y\}$ for all $(x,y)\in X^2\setminus \Delta^2_X$,
\item If there exists $x\in X\setminus\{e_G\}$ such that $G(x,x)=e_G$, then $x$ is unique and we have $G(x,y)=G(y,x)=y$ for all $y\in X\setminus\{x,e_G\}$.
\end{itemize}
Note that $Q_1^2(X)=A_1^2(X)=X^{X^2}$ when $|X|=1$. Also, it is not difficult to see that $Q_1^2(X)\subseteq A_1^2(X)$. Actually, we have that $G\in Q_1^2(X)$ if and only if $G\in A_1^2(X)$ and $|{G}^{-1}[e]| = 1$, where $e$ is the neutral element for $G$. A characterization of the class of associative and quasitrivial binary operations as well as $Q_1^2(X)$ can be found in \cite[Theorem 2.1, Fact 2.4]{CouDevMar2}.

Recall that two groupoids $(X,G)$ and $(Y,G')$ are said to be \emph{isomorphic}, and we denote it by $(X,G) \simeq (Y,G')$, if there exists a bijection $\phi\colon X \to Y$ such that {{$\phi(G(x,y)) = G'(\phi(x),\phi(y))$ for every $x,y\in X$.}}
{{The following straightforward proposition states, in particular, that any $G\in A_1^2(X)\setminus Q_1^2(X)$ gives rise to a semigroup which has a unique $2$-element subsemigroup isomorphic to the additive semigroup on $\mathbb{Z}_2$.}}

\begin{proposition}\label{prop:obv}
Let $G\colon X^2 \to X$ be an operation. Then $G \in A_1^2(X)\setminus Q_1^2(X)$ if and only if there exists a unique pair $(x,y)\in X^2\setminus \Delta^2_X$ such that the following conditions hold
\begin{enumerate}
\item[(a)]
$(\{x,y\},G|_{\{x,y\}^2}) \simeq (\mathbb{Z}_2,+),$
\item[(b)] $G|_{(X\setminus\{x,y\})^2}$ is associative and quasitrivial, and
\item[(c)] every $z\in X\setminus\{x,y\}$ is an annihilator for $G|_{\{x,y,z\}^2}$.
\end{enumerate}
\end{proposition}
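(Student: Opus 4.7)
The plan is to prove the biconditional by directly unpacking the two sides. Since the proposition is labelled straightforward in the text, both directions reduce to short case analyses guided by the three bullets defining $A_1^2(X)$ and the three conditions (a)--(c). In the forward direction I identify the required pair from the data of $G$; in the converse I verify the three defining bullets of $A_1^2(X)$, associativity, and the failure of quasitriviality.

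For the forward direction, assume $G\in A_1^2(X)\setminus Q_1^2(X)$. The second bullet of the definition of $A_1^2(X)$ already ensures $G(w_1,w_2)\in\{w_1,w_2\}$ off the diagonal, so the failure of quasitriviality must occur on the diagonal: combined with the first and third bullets, this yields a unique $a\in X\setminus\{e_G\}$ with $G(a,a)=e_G$. Setting $y:=e_G$ and $x:=a$, condition (a) is immediate by writing down the Cayley table of $G|_{\{x,y\}^2}$ and comparing it with that of $(\mathbb{Z}_2,+)$. For condition (c), fix $z\in X\setminus\{x,y\}$: neutrality of $y$ handles the $y$-$z$ products, the third bullet of $A_1^2(X)$ handles the $x$-$z$ products, and the first bullet together with the uniqueness of $a$ forces $G(z,z)=z$. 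Condition (b) then follows because the values just identified show that $G|_{(X\setminus\{x,y\})^2}$ is quasitrivial, and associativity is inherited. Uniqueness of $(x,y)$ (up to swap, which the conditions do not distinguish) comes from the fact that $y$ is determined as the unique neutral element of $G$ and $x$ as the unique element of $X\setminus\{y\}$ with $G(x,x)=y$.

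For the converse, assume $(x,y)$ satisfies (a)--(c). The isomorphism in (a) singles out one element $e\in\{x,y\}$ corresponding to $0\in\mathbb{Z}_2$ and an element $a$ corresponding to $1$, so that $G(e,e)=e$, $G(a,a)=e$, and $G(e,a)=G(a,e)=a$. Condition (c) then pins down every product involving an element of $X\setminus\{x,y\}$: each such $z$ satisfies $G(z,z)=z$ and absorbs either $x$ or $y$ on either side. From these explicit values one reads off that $e$ is a neutral element for $G$, that it is the unique neutral element (any candidate in $\{a\}\cup(X\setminus\{x,y\})$ is excluded by the computed values), that $G\notin Q_1^2(X)$ since $G(a,a)=e\neq a$, and that the three bullets of $A_1^2(X)$ hold by a short case analysis on whether each argument lies in $\{x,y\}$ or not. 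Associativity of $G$ on all of $X$ is handled by the same case split: the all-inside and all-outside cases are given by (a) and (b) respectively, while any mixed triple $(u,v,w)$ reduces at once because each $z\in X\setminus\{x,y\}$ absorbs adjacent arguments from $\{x,y\}$ by (c), so both sides of $G(G(u,v),w)=G(u,G(v,w))$ evaluate to the same element.

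The only mildly delicate step is this associativity verification, but it is not really an obstacle: condition (c) fully prescribes the interaction between $\{x,y\}$ and its complement, and each mixed case collapses immediately. Everything else is a direct reading of the definitions, which is what justifies calling the proposition straightforward.
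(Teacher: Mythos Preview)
The paper does not supply a proof of this proposition; it introduces it as ``straightforward'' and leaves the verification to the reader. Your proposal correctly fills in the omitted argument: in the forward direction you identify the pair as $\{a,e_G\}$ where $a$ is the unique non-idempotent element guaranteed by the third bullet of the definition of $A_1^2(X)$, and in the converse you rebuild the three bullets and associativity from conditions (a)--(c). The case split for associativity in the mixed situation is sound, although your phrasing (``absorbs adjacent arguments from $\{x,y\}$'') slightly understates what is used: when two of $u,v,w$ lie in $X\setminus\{x,y\}$ you also need that $X\setminus\{x,y\}$ is closed under $G$, which follows from the quasitriviality in (b). With that remark made explicit, the argument is complete and matches what the authors evidently had in mind.
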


\begin{proposition}\label{prop:aqe}
Let $F\colon X^n\to X$ be an associative and quasitrivial operation. Suppose that $e \in X$ is a neutral element for $F$.
\begin{enumerate}
\item[(a)] If $n$ is even, then $F$ is reducible to an operation $G\in Q_1^2(X)$.
\item[(b)] If $n$ is odd, then $F$ is reducible to the operation $G^e\in A_1^2(X)$.
\end{enumerate}
\end{proposition}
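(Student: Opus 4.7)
By Lemma~\ref{lemma:Dud3}, the operation $G^e(x,y)=F(x,(n-2)\Cdot e,y)$ is associative and is a binary reduction of $F$; moreover $e$ is clearly a neutral element for $G^e$, and since any binary associative operation admits at most one neutral element, $e$ is the unique neutral element of $G^e$. The quasitriviality of $F$ yields immediately
\[
G^e(x,y)=F(x,(n-2)\Cdot e,y)\in\{x,e,y\}\qquad\text{for all }x,y\in X,
\]
which already gives $G^e(x,x)\in\{e,x\}$. So I only have to (i) rule out the value $e$ for $G^e(x,y)$ when $x\neq y$ with $x,y\neq e$, (ii) in case~(a) rule out $G^e(x,x)=e$ for $x\neq e$, and (iii) in case~(b) establish the uniqueness of the ``square-root of $e$'' together with the absorption identity.

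For (i), assume to the contrary that $G^e(x,y)=e$ with $x\neq y$ and $x,y\neq e$. Applying associativity of $G^e$,
\[
x=G^e(x,e)=G^e(x,G^e(x,y))=G^e(G^e(x,x),y).
\]
Since $G^e(x,x)\in\{e,x\}$, the right-hand side equals either $G^e(x,y)=e$ or $G^e(e,y)=y$, both contradictions. Hence $G^e(x,y)\in\{x,y\}$ whenever $x\neq y$.

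For (ii), assume $n$ is even and $G^e(x,x)=e$ for some $x\neq e$. Expand $F(n\Cdot x)$ by right-nesting through the reduction $G^e$, evaluating from the inside out: $G^e(x,x)=e$, then $G^e(x,e)=x$, then $G^e(x,x)=e$, and so on. After the $k$-th application the value is $e$ if $k$ is odd and $x$ if $k$ is even; performing $n-1$ applications with $n$ even gives $e$, contradicting $F(n\Cdot x)=x$, which holds by quasitriviality. Thus $G^e$ is idempotent, and together with (i) this shows $G^e$ is quasitrivial with unique neutral element $e$, i.e.\ $G^e\in Q_1^2(X)$, proving (a).

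For (iii), suppose $n$ is odd and there exist distinct $x,x'\in X\setminus\{e\}$ with $G^e(x,x)=G^e(x',x')=e$. By (i), $G^e(x,x')\in\{x,x'\}$ and $G^e(x',x)\in\{x,x'\}$. Associativity gives $x'=G^e(e,x')=G^e(G^e(x,x),x')=G^e(x,G^e(x,x'))$; inspecting both possible values for $G^e(x,x')$ forces $G^e(x,x')=x'$, and symmetrically $G^e(x',x)=x$. But then $e=G^e(x,x)=G^e(x,G^e(x',x))=G^e(G^e(x,x'),x)=G^e(x',x)=x$, a contradiction. Finally, for any $y\in X\setminus\{x,e\}$, by (i), $G^e(x,y)\in\{x,y\}$; if $G^e(x,y)=x$ then $e=G^e(x,x)=G^e(G^e(x,x),y)=G^e(x,G^e(x,y))=G^e(x,x)=e$---wait, rather $G^e(x,G^e(x,y))=G^e(x,x)=e$ and also equals $G^e(G^e(x,x),y)=G^e(e,y)=y$, yielding $y=e$, a contradiction. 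Hence $G^e(x,y)=y$, and symmetrically $G^e(y,x)=y$. This verifies the last defining clause of $A_1^2(X)$, so $G^e\in A_1^2(X)$, proving (b). The only delicate step is the parity argument in (ii), which is where the distinction between even and odd $n$ enters decisively; the remaining work is an exercise in pushing elements through associativity together with the restricted range $\{x,e,y\}$ inherited from quasitriviality of $F$.
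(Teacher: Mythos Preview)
Your argument is correct. Part~(b) follows the same line as the paper: both use Lemma~\ref{lemma:Dud3} to obtain the reduction $G^e$, then push elements through associativity to rule out $G^e(x,y)=e$ for $x\neq y$, derive the absorption identity $G^e(x,y)=G^e(y,x)=y$, and establish uniqueness of the element squaring to $e$; the individual computations differ only cosmetically (mirror-image choices of which side to expand). Part~(a), however, is genuinely different. The paper invokes Theorem~\ref{thm:ack1}(a) (Ackerman's result) to obtain \emph{some} quasitrivial binary reduction $G$ and then observes that $G_{n-2}((n-1)\Cdot e)$ is its neutral element; you instead show directly that $G^e$ itself is quasitrivial, via the parity argument on the nested evaluation of $F(n\Cdot x)$. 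Your route is more self-contained since it avoids the external dependence on Ackerman's theorem, and as a bonus it identifies the quasitrivial reduction explicitly as $G^e$. The only cosmetic issue is the ``wait, rather'' interjection in~(iii); the intended computation $G^e(x,G^e(x,y))=G^e(G^e(x,x),y)=G^e(e,y)=y$ versus $G^e(x,G^e(x,y))=G^e(x,x)=e$ (under the hypothesis $G^e(x,y)=x$) is fine and should simply be written cleanly.
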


\begin{proof}
(a) By Theorem \ref{thm:ack1}(a) we have that $F$ is reducible to an associative and quasitrivial binary operation $G\colon X^2 \to X$. Finally, we observe that $G_{n-2}((n-1)\Cdot e)$ is the neutral element for $G$.

(b) {{By Lemma \ref{lemma:Dud3} we have that $F$ is reducible to an associative operation $G^e\colon X^2\to X$ of the form \eqref{eq:dud} and that $e$ is also a neutral element for $G^e$. Since $F$ is quasitrivial, it follows from \eqref{eq:dud} that $G^e(x,x) \in \{x,e\}$ for all $x \in X$. If $|X|=2$, then the proof is complete. So suppose that $|X|>2$ and let us show that $G^e(x,y)\in \{x,y\}$ for all $(x,y)\in X^2 \setminus \Delta^2_X$.}} Since $e$ is a neutral element for $G^e$, we have that $G^e(x,e) = G^e(e,x) = x$ for all $x\in X\setminus \{e\}$. So suppose to the contrary that there are distinct $x,y \in X\setminus \{e\}$ such that  $G^e(x,y)\not \in \{x,y\}$. As $G^e$ is a reduction of $F$ and $F$ is quasitrivial, we must have $G^e(x,y)= e$. But then, using the associativity of $G^e$, we have that
\[
y = G^e(e,y) = G^e(G^e(x,y),y) = G^e(x,G^e(y,y)) \in \{G^e(x,y),G^e(x,e)\} = \{e,x\},
\]
which contradicts the fact that $x$, $y$ and $e$ are pairwise distinct.

Now, suppose that there exists $x\in X\setminus\{e\}$ such that $G^e(x,x)=e$ and let $y\in X\setminus \{x,e\}$. Since
\[
y = G^e(e,y) = G^e(G^e(x,x),y) = G^e(x,G^e(x,y)),
\]
we must have $G^e(x,y) = y$. Similarly, we can show that $G^e(y,x) = y$.

To complete the proof, we only need to show that such an $x$ is unique. Suppose to the contrary that there exists $x'\in X\setminus\{x,e\}$ such that $G^e(x',x')=e$. Since $x,x'$ and $e$ are pairwise distinct and
\[
x' = G^e(e,x') = G^e(G^e(x,x),x') = G^e(x,G^e(x,x')),
\]
and
\[
x = G^e(x,e) = G^e(x,G^e(x',x')) = G^e(G^e(x,x'),x'),
\]
we must have $x=G^e(x,x')=x'$, which yields the desired contradiction.
\end{proof}

We observe that the associative operation $F\colon \mathbb{Z}_2^n \to \mathbb{Z}_2$ defined by
\[
F(x_1,\ldots,x_n) \equiv \sum_{i=1}^{n}x_i ~(\mathrm{mod}~ 2), \qquad x_1,\ldots,x_n \in \mathbb{Z}_2,
\]
has 2 neutral elements, namely $0$ and $1$, when $n$ is odd. Moreover, it is quasitrivial if and only if $n$ is odd. This also illustrates the fact that an associative and quasitrivial $n$-ary operation that has 2 neutral elements does not necessarily have a quasitrivial reduction. Indeed, when $n$ is odd, $G(x_1,x_2)\equiv x_1+x_2~(\mathrm{mod}~2)$ and  $G'(x_1,x_2)\equiv x_1+x_2+1~(\mathrm{mod}~2)$ on $X=\mathbb{Z}_2$ are two distinct reductions of $F$ but neither is quasitrivial.

Clearly, if an associative operation $F\colon X^n\to X$ is reducible to an operation $G\in Q_1^2(X)$, then it is quasitrivial. The following proposition provides a necessary and sufficient condition for $F$ to be quasitrivial when $G\in A_1^2(X)\setminus Q_1^2(X)$.

\begin{proposition}\label{prop:blue}
Let $F\colon X^n\to X$ be an associative operation. Suppose that $F$ is reducible to an operation $G\in A_1^2(X)\setminus Q_1^2(X)$. Then, $F$ is quasitrivial if and only if $n$ is odd.
\end{proposition}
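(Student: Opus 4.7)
The plan is to let $e$ denote the neutral element of $G$ and, using the defining properties of $A_1^2(X)\setminus Q_1^2(X)$ (equivalently Proposition \ref{prop:obv}), let $a$ be the unique element of $X\setminus\{e\}$ with $G(a,a)=e$. Then $(\{a,e\},G|_{\{a,e\}^2})\simeq(\mathbb{Z}_2,+)$; the restriction $G|_{(X\setminus\{a,e\})^2}$ is associative and quasitrivial; and both $a$ and $e$ act as neutral elements on $X\setminus\{a,e\}$, namely $G(a,z)=G(z,a)=G(e,z)=G(z,e)=z$ for all $z\in X\setminus\{a,e\}$.

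For the necessity ($F$ quasitrivial $\Rightarrow$ $n$ odd), I would compute the diagonal value $F(n\Cdot a)$ directly from the reduction. An easy induction shows that the nested evaluation $G(a,G(a,\ldots,G(a,a)))$ alternates between $e$ and $a$, so $F(n\Cdot a)$ equals $e$ when $n$ is even and $a$ when $n$ is odd. Quasitriviality forces $F(n\Cdot a)=a$, ruling out the even case.

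For the sufficiency ($n$ odd $\Rightarrow$ $F$ quasitrivial), I would split on whether some coordinate lies outside $\{a,e\}$. If all $x_i\in\{a,e\}$, the isomorphism with $(\mathbb{Z}_2,+)$ reduces $F(x_1,\ldots,x_n)$ to a sum modulo $2$, and one checks directly that the result lies in $\{x_1,\ldots,x_n\}$ (the only potential failure, all $x_i=a$ yielding $e$, occurs precisely when $n$ is even). In the remaining case, let $(y_1,\ldots,y_m)$ with $m\geq 1$ denote the subsequence of those $x_i$ lying in $X\setminus\{a,e\}$. I would then establish, by induction on $n$, the reduction lemma that $F(x_1,\ldots,x_n)$ equals the iterated $G$-product of $(y_1,\ldots,y_m)$. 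Granting this lemma, associativity and quasitriviality of $G|_{(X\setminus\{a,e\})^2}$ give $F(x_1,\ldots,x_n)\in\{y_1,\ldots,y_m\}\subseteq\{x_1,\ldots,x_n\}$.

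The main technical point is the reduction lemma. The driving invariant is that whenever a nested sub-product contains at least one element of $X\setminus\{a,e\}$, its value remains in $X\setminus\{a,e\}$ (since $G$ is quasitrivial there), so the surrounding $a$'s and $e$'s get absorbed via $G(a,z)=G(e,z)=z$. The delicate case is when the tail $(x_2,\ldots,x_n)$ lies entirely in $\{a,e\}$: its value then falls in $\{a,e\}$, but the standing assumption of this case forces $x_1\in X\setminus\{a,e\}$, and $G(x_1,\cdot)$ absorbs the tail to produce $x_1=y_1$, which is precisely the iterated product over the singleton reduced sequence. Once this bookkeeping is settled, the rest of the induction is routine.
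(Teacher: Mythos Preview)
Your argument is correct. The necessity direction matches the paper's proof exactly: both compute $F(n\Cdot a)$ via the $\mathbb{Z}_2$ structure on $\{a,e\}$ and observe that the result is $e$ when $n$ is even.

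For sufficiency, however, you take a genuinely different route. The paper observes in one line that, since $G\in A_1^2(X)\setminus Q_1^2(X)$, it suffices to verify that $F$ is \emph{idempotent}; it then checks $F(n\Cdot a)=a$ for odd $n$ (the only diagonal point where $G$ itself fails idempotency) and stops. You instead carry out the full case split and prove a reduction lemma showing that any tuple containing some $z\in X\setminus\{a,e\}$ has $F$-value equal to the $G$-product of its $X\setminus\{a,e\}$ subsequence. In effect, your reduction lemma is exactly the work that justifies the paper's one-line claim that ``idempotency is all that remains'': the off-diagonal quasitriviality of $F$ is not entirely immediate from $G\in A_1^2(X)$, and your absorption argument (each $z\notin\{a,e\}$ absorbs neighbouring $a$'s and $e$'s, and any $\{a,e\}$-block collapses into $\{a,e\}$ before being absorbed) is precisely what is needed to see it. So the paper's proof is more economical but leaves this verification to the reader, while yours is longer but self-contained. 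A minor quibble: your ``induction on $n$'' for the reduction lemma is really an induction on the length of an arbitrary $G$-product (with $n$ fixed); phrasing it that way would make the argument cleaner.
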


\begin{proof}
To show that the condition is necessary, let $x \in X\setminus\{e\}$ such that $G(x,x)=e$. If $n$ is even, then $F(n \Cdot x) = G_{\frac{n}{2}-1}(\frac{n}{2}\Cdot G(x,x)) = e$, contradicting quasitriviality.

So let us prove that the condition is also sufficient. Note that $G\in A_1^2(X)\setminus Q_1^2(X)$, and thus we only need to show that $F$ is idempotent. Since $F$ is reducible to $G$, we clearly have that $F(n\Cdot x) = x$ for all $x\in X$ such that $G(x,x) = x$.

Let $y \in X\setminus\{e\}$ such that $G(y,y)=e$. Since $n$ is odd, we have that
\[
F(n\Cdot y) = G\Bigl(\, y,G_{\frac{n-1}{2}-1}\Bigl(\, \frac{n-1}{2}\Cdot G(y,y)\, \Bigr)\, \Bigr) = G(y,e) = y.
\]
Hence, $F$ is idempotent and the proof is now complete.
\end{proof}

It is not difficult to see that the operation $F\colon \mathbb{Z}_{n-1}^n \to \mathbb{Z}_{n-1}$ defined by
\[
F(x_1,\ldots,x_n) \equiv \sum_{i=1}^{n}x_i ~(\mathrm{mod}~ (n-1)), \qquad x_1,\ldots,x_n \in \mathbb{Z}_{n-1},
\]
is associative, idempotent, symmetric {{(that is, $F(x_1,\ldots,x_n)$ is invariant under any permutation of $x_1,\ldots,x_n$),}} and has $n-1$ neutral elements. However, this number is much smaller for quasitrivial operations.

\begin{proposition}\label{prop:arity}
Let $F\colon X^n \to X$ be an associative and quasitrivial operation.
\begin{enumerate}
\item[(a)] If $n$ is even, then $F$ has at most one neutral element.
\item[(b)] If $n$ is odd, then $F$ has at most two neutral elements.
\end{enumerate}
\end{proposition}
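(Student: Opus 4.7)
The plan is to derive both parts from Proposition~\ref{prop:aqe} together with observation~(iii) preceding Proposition~\ref{prop:unired2}, which asserts that whenever $e$ is a neutral element of $F$ and $G$ is a binary reduction of $F$, the iterated value $G_{n-2}((n-1)\Cdot e)$ coincides with the (unique) neutral element of $G$.

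For part~(a), I would suppose toward a contradiction that $e_1\neq e_2$ are both neutral elements of $F$. Since $F$ is associative, quasitrivial and admits a neutral element, Proposition~\ref{prop:aqe}(a) yields a reduction $G\in Q_1^2(X)$ of $F$, whose unique neutral element I denote by $e_G$. Being quasitrivial, the binary operation $G$ is idempotent, and a straightforward induction on $m$ then gives $G_m((m+1)\Cdot e)=e$ for every $e\in X$. Applying observation~(iii) with $e=e_1$ and then with $e=e_2$ yields $e_1=e_G=e_2$, a contradiction.

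For part~(b), I would suppose toward a contradiction that $e_1,e_2,e_3\in X$ are three pairwise distinct neutral elements of $F$. Applying Proposition~\ref{prop:aqe}(b) with $e=e_1$ produces a reduction $G:=G^{e_1}\in A_1^2(X)$ whose unique neutral element is $e_1$. The first defining condition of $A_1^2(X)$ forces $G(e_i,e_i)\in\{e_1,e_i\}$ for each $i\in\{2,3\}$. If $G(e_i,e_i)=e_i$ for some such $i$, then an induction on the arity shows $G_{n-2}((n-1)\Cdot e_i)=e_i$, which by observation~(iii) must equal the neutral element $e_1$ of $G$, contradicting $e_i\neq e_1$. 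Hence $G(e_2,e_2)=G(e_3,e_3)=e_1$, and the uniqueness clause in the third bullet of the definition of $A_1^2(X)$ then forces $e_2=e_3$, the desired contradiction.

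The only delicate step is part~(b), where one must invoke simultaneously the dichotomy $G(x,x)\in\{e_G,x\}$ and the uniqueness of a non-neutral element whose square equals the neutral; both facts are packed into the definition of the class $A_1^2(X)$, and the membership $G^{e_1}\in A_1^2(X)$ is exactly the content of Proposition~\ref{prop:aqe}(b). Everything else reduces to a routine unfolding of observation~(iii) and of the iterated reduction in Definition~\ref{de:FH5}.
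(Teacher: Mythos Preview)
Your argument is correct. Part~(a) is essentially the paper's own proof, rephrased through observation~(iii): both reduce $F$ to a quasitrivial binary $G$, use idempotency to get $G_{n-2}((n-1)\Cdot e_i)=e_i$, and conclude that both $e_1$ and $e_2$ coincide with the unique neutral element of $G$.

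Part~(b), however, differs genuinely from the paper's route. The paper first passes through a ternary reduction $H$ of $F$ (via Theorem~\ref{thm:ack1}(b)), observes that $e_1,e_2,e_3$ remain neutral for $H$, and then reduces $H$ to the three binary operations $G^{e_1},G^{e_2},G^{e_3}$; evaluating $H(e_1,e_2,e_3)$ in each of these three ways forces it to lie in $\{e_2,e_3\}\cap\{e_1,e_3\}\cap\{e_1,e_2\}=\varnothing$. You instead reduce $F$ \emph{directly} to a single binary operation $G^{e_1}\in A_1^2(X)$ via Proposition~\ref{prop:aqe}(b), use observation~(iii) to rule out $G^{e_1}(e_i,e_i)=e_i$ for $i=2,3$, and then appeal to the uniqueness clause in the definition of $A_1^2(X)$ to force $e_2=e_3$. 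Your approach is shorter and exploits the structure of $A_1^2(X)$ that was already established in the proof of Proposition~\ref{prop:aqe}(b); the paper's approach is more self-contained in that it does not unpack the third bullet of the definition of $A_1^2(X)$, relying instead on a direct three-way evaluation. Both are valid and avoid circularity, since Proposition~\ref{prop:aqe} and observation~(iii) precede Proposition~\ref{prop:arity} and do not invoke it.
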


\begin{proof}
(a) By Theorem \ref{thm:ack1}(a) we have that $F$ is reducible to an associative and quasitrivial binary operation $G\colon X^2 \to X$. Suppose that $e_1,e_2\in X$ are two neutral elements for $F$. Since $G$ is quasitrivial we have
\begin{align*}
e_2 ~&=~ F((n-1)\Cdot e_1,e_2) ~ = ~ G(G_{n-2}((n-1)\Cdot e_1),e_2) \\
    ~&=~ G(e_1,e_2) ~=~ G(e_1,G_{n-2}((n-1)\Cdot e_2)) ~=~ F(e_1,(n-1)\Cdot e_2) ~=~ e_1.
\end{align*}
Hence, $F$ has at most one neutral element.

(b) By Theorem \ref{thm:ack1}(b) we have that $F$ is reducible to an associative and quasitrivial ternary operation $H\colon X^3 \to X$. For a contradiction, suppose that $e_1,e_2,e_3\in X$ are three {distinct}
 neutral elements for $F$. Since $H$ is quasitrivial, it is not difficult to see that $e_1$, $e_2$, and $e_3$ are neutral elements for $H$. Also, by Proposition \ref{prop:aqe}(b) we have that $H$ is reducible to the operations $G^{e_1}, G^{e_2}, G^{e_3}\in A_1^2(X)$. In particular, we have
\begin{multline*}
G^{e_1}(e_2,e_3) = G^{e_1}(G^{e_1}(e_1,e_2),e_3) \\
= H(e_1,e_2,e_3) = G^{e_2}(G^{e_2}(e_1,e_2),e_3) = G^{e_2}(e_1,e_3)
\end{multline*}
and
\[
H(e_1,e_2,e_3) = G^{e_3}(e_1,G^{e_3}(e_2,e_3)) = G^{e_3}(e_1,e_2).
\]
Hence, $H(e_1,e_2,e_3) \in \{e_2,e_3\}\cap\{e_1,e_3\}\cap\{e_1,e_2\},$ which shows that $e_1,e_2,e_3$ are not pairwise distinct, and thus yielding the desired contradiction.
\end{proof}

{{\begin{corollary}\label{prop:countaqe}
Let $F\colon X^n\to X$ be an operation and let $e_1$ and $e_2$ be distinct elements of $X$. Then $F$ is associative, quasitrivial, and has exactly the two neutral elements $e_1$ and $e_2$ if and only if $n$ is odd and $F$ is reducible to exactly the two operations $G^{e_1},G^{e_2}\in A_1^2(X)\setminus Q_1^2(X)$.
\end{corollary}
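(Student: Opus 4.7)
The plan is to prove both implications by assembling results already in this section. For the forward direction, assume $F$ is associative and quasitrivial with $E_F=\{e_1,e_2\}$. Proposition~\ref{prop:arity}(a) immediately forces $n$ to be odd, since otherwise $F$ would admit at most one neutral element. The bijection $\sigma\colon E_F\to R_F$ from Proposition~\ref{prop:unired2} then gives $R_F=\{G^{e_1},G^{e_2}\}$ as two distinct reductions, and both lie in $A_1^2(X)$ by Proposition~\ref{prop:aqe}(b).

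The main step in this direction, and the one I expect to be the principal obstacle, is ruling out $G^{e_i}\in Q_1^2(X)$. Suppose for contradiction that $G^{e_1}$ were quasitrivial. Then $G^{e_1}(e_2,e_2)=e_2$ by quasitriviality and $G^{e_1}(e_1,e_2)=e_2$ by neutrality of $e_1$ for $G^{e_1}$. Unfolding the right-nested reduction
\[
F(e_1,(n-1)\Cdot e_2)\;=\;G^{e_1}_{n-1}(e_1,(n-1)\Cdot e_2),
\]
the innermost application gives $G^{e_1}(e_2,e_2)=e_2$, every subsequent $G^{e_1}(e_2,\cdot)$ keeps the accumulated value at $e_2$, and the outermost $G^{e_1}(e_1,e_2)$ again yields $e_2$. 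Hence $F(e_1,(n-1)\Cdot e_2)=e_2$, contradicting the fact that $e_2$ is a neutral element for $F$, which forces this value to be $e_1$. The argument for $G^{e_2}$ is symmetric.

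For the converse, suppose $n$ is odd and $R_F=\{G^{e_1},G^{e_2}\}\subseteq A_1^2(X)\setminus Q_1^2(X)$. Being a composition of an associative binary operation, $F$ is itself associative, and Proposition~\ref{prop:blue} applied to the reduction of $F$ to $G^{e_1}\in A_1^2(X)\setminus Q_1^2(X)$ with $n$ odd yields that $F$ is quasitrivial. By observation~(ii) preceding Proposition~\ref{prop:unired2}, the neutral elements $e_1$ of $G^{e_1}$ and $e_2$ of $G^{e_2}$ are both neutral elements of $F$, so $\{e_1,e_2\}\subseteq E_F$. Finally, Proposition~\ref{prop:arity}(b) caps $|E_F|$ at two, so $E_F=\{e_1,e_2\}$ exactly, completing the proof.
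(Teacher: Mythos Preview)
Your proof is correct and follows essentially the same route as the paper. For the forward direction you invoke Propositions~\ref{prop:arity}, \ref{prop:unired2}, and \ref{prop:aqe} just as the paper does; the only cosmetic difference is that the paper records the key fact as the direct observation $G^{e_1}(e_2,e_2)=e_1$ (which immediately witnesses non-quasitriviality), whereas you obtain the same conclusion by contradiction through the computation of $F(e_1,(n-1)\Cdot e_2)$---these are the same calculation unpacked differently. For the converse, the paper bounds $|E_F|$ via the bijection $\sigma\colon E_F\to R_F$ of Proposition~\ref{prop:unired2} (so $|E_F|=|R_F|=2$), while you instead appeal to Proposition~\ref{prop:arity}(b); both are valid and equally short.
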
}}

\begin{proof}
(Necessity) This follows from Propositions \ref{prop:unired2}, \ref{prop:aqe}, and \ref{prop:arity} together with the observation that $G^{e_1}(e_2,e_2) = e_1$ and $G^{e_2}(e_1,e_1)=e_2$.

(Sufficiency) This follows from Propositions \ref{prop:unired2} and \ref{prop:blue}.
\end{proof}

We can now state and prove the main result of this section.

{{\begin{theorem}\label{thm:unired}
Let $F\colon X^n\to X$ be an associative and quasitrivial operation. The following assertions are equivalent.
\begin{enumerate}
\item[(i)] Any binary reduction of $F$ is idempotent.
\item[(ii)] Any binary reduction of $F$ is quasitrivial.
\item[(iii)] $F$ has at most one binary reduction.
\item[(iv)] $F$ has at most one neutral element.
\item[(v)] $F((n-1)\Cdot x,y) ~ = ~ F(x,(n-1)\Cdot y)$ for any $x,y\in X$.
\end{enumerate}
\end{theorem}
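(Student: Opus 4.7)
The plan is to prove the equivalence through the cycle (i) $\Rightarrow$ (iii) $\Rightarrow$ (iv) $\Rightarrow$ (i), together with the side implications (i) $\Leftrightarrow$ (ii) and (v) $\Leftrightarrow$ (iv) (via (i) $\Rightarrow$ (v) and (v) $\Rightarrow$ (iv)). Most implications are routine. Write $y^{[k]}$ for the $k$-fold $G$-product of $y$. For (i) $\Leftrightarrow$ (ii): (ii) $\Rightarrow$ (i) is trivial, and conversely, if a reduction $G$ is idempotent, then $y^{[n-1]}=y$ and thus $G(x,y)=G(x,y^{[n-1]})=F(x,y,\ldots,y)\in\{x,y\}$ by quasitriviality of $F$. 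The implication (i) $\Rightarrow$ (iii) is Lemma \ref{lem:uni}, and (iii) $\Rightarrow$ (iv) follows from Proposition \ref{prop:unired2}, as two distinct neutral elements $e_1,e_2$ yield distinct reductions $G^{e_1},G^{e_2}$. For (v) $\Rightarrow$ (iv), distinct neutral elements give $F((n-1)\Cdot e_1,e_2)=e_2\neq e_1=F(e_1,(n-1)\Cdot e_2)$, contradicting (v). Finally, (i) $\Rightarrow$ (v) is direct: an idempotent reduction $G$ satisfies $F((n-1)\Cdot x,y)=G(x^{[n-1]},y)=G(x,y)=G(x,y^{[n-1]})=F(x,(n-1)\Cdot y)$.

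The nontrivial step is (iv) $\Rightarrow$ (i). I split on $|E_F|$. If $|E_F|=1$ with neutral $e$, then by Proposition \ref{prop:unired2} the unique reduction is $G^e$, and Proposition \ref{prop:aqe} places it in $Q_1^2(X)$ when $n$ is even and in $A_1^2(X)$ when $n$ is odd. In the remaining subcase ($n$ odd, $G^e\in A_1^2(X)\setminus Q_1^2(X)$), the unique $x\in X\setminus\{e\}$ with $G^e(x,x)=e$ can be shown, by computing $F((i-1)\Cdot x,y,(n-i)\Cdot x)$ and splitting on the common parity of $i-1$ and $n-i$ (and using the defining properties of $A_1^2(X)$), to be a second neutral element of $F$, contradicting $|E_F|=1$. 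So $G^e\in Q_1^2(X)$ and is in particular idempotent.

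The main obstacle is the subcase $|E_F|=0$, where a neutral element for $F$ must be manufactured from a failure of idempotence. Fix a reduction $G$ (existence by Corollary \ref{cor:main1}) and suppose $G(x_0,x_0)=y_0\neq x_0$. Since $x_0^{[n]}=F(n\Cdot x_0)=x_0$, the sequence $(x_0^{[k]})$ is periodic with period dividing $n-1$. Using $y_0^{[2]}=x_0^{[4]}$ one finds $F((n-2)\Cdot x_0,y_0,y_0)=x_0^{[n+2]}=x_0^{[3]}$, and quasitriviality forces $x_0^{[3]}\in\{x_0,y_0\}$. The option $x_0^{[3]}=y_0$ propagates inductively to $x_0^{[k]}=y_0$ for every $k\geq 2$, contradicting $x_0^{[n]}=x_0$; so $x_0^{[3]}=x_0$, the period is exactly $2$, and $x_0^{[n]}=x_0$ forces $n$ odd (the $n$-even case yielding an immediate contradiction). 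One then reads $G(x_0,y_0)=G(y_0,x_0)=x_0$ and $G(y_0,y_0)=y_0$. For any $z\in X$, quasitriviality of $F$ applied to the equal quantities $F((n-1)\Cdot x_0,z)=G(y_0,z)=F((n-1)\Cdot y_0,z)$ forces $G(y_0,z)\in\{x_0,z\}\cap\{y_0,z\}=\{z\}$, and symmetrically $G(z,y_0)=z$. Hence $y_0$ is a neutral element for $G$, and therefore for $F$, contradicting $|E_F|=0$. Thus $G$ is idempotent.
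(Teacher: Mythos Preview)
Your proof is correct and follows essentially the same architecture as the paper's: the routine implications are handled the same way, and the substantive step (iv) $\Rightarrow$ (i) is treated by the same case split on $|E_F|$, in each case producing an extra neutral element from a non-idempotent reduction. The execution differs only in details: in the $|E_F|=0$ case you first pin down the period-$2$ structure of $(x_0^{[k]})$ and identify the neutral element as $y_0=G(x_0,x_0)$, whereas the paper shows directly that $G_{n-2}((n-1)\Cdot x)$ is neutral without naming it; in the $|E_F|=1$ case you verify outright that $x$ is neutral via the $A_1^2(X)$ relations, while the paper instead assumes $x$ is not neutral and reaches $e=G(y,x)\in\{x,y\}$.
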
}}

\begin{proof} The implications $\textrm{(i)}\Rightarrow (\textrm{(ii)}~{{\text{and}}}~ \textrm{(v)})$ and $\textrm{(v)}\Rightarrow \textrm{(iv)}$ are straightforward.
By Proposition \ref{prop:arity} and Corollary \ref{prop:countaqe} we also have the implications $(\textrm{(ii)} ~{{\text{or}}}~ \textrm{(iii)}) \Rightarrow \textrm{(iv)}$. Hence, to complete the proof, it suffices to show that {$\textrm{(iv)} \Rightarrow (\textrm{(i)} ~{{\text{and}}}~ \textrm{(iii)})$}. First, we prove that $\textrm{(iv)} \Rightarrow \textrm{(i)}$.
We consider the two possible cases.

If $F$ has a unique neutral element $e$, then by Proposition \ref{prop:unired2} $G=G^e$ is the unique reduction of $F$ with neutral element $e$. For the sake of a contradiction, suppose that $G$ is not idempotent.
By Proposition \ref{prop:aqe} we then have that $n$ is odd and $G\in A^2_1(X)\setminus Q^2_1(X)$.

So let $x\in X\setminus \{e\}$ such that $G(x,x)\neq x$. Since $G=G^e$, we must have $G(x,x)=e$.
It is not difficult to see that $F(y,(n-1)\Cdot x) = y = F((n-1)\Cdot x,y)$ for all $y\in X$.
Now, if  there is $i\in \{2,\ldots,n-1\}$ such that
\[
F((i-1)\Cdot x,e,(n-i)\Cdot x) = x,
\]
then we have that $i-1$ and $n-i$ are both even or both odd (since $n$ is odd), and thus
\[
x = F((i-1)\Cdot x,e,(n-i)\Cdot x) \in \{G_2(x,e,x),G_2(e,e,e)\} = \{e\},
\]
which contradicts our assumption that $x\neq e$.
Hence,  we have $F((i-1)\Cdot x,e,(n-i)\Cdot x) = e$ for all $i\in \{1,\ldots,n\}$.

Now, if $|X|=2$, then the proof is complete since $e$ and $x$ are both neutral elements for $F$, which contradicts our assumption. So suppose that $|X|>2$.

Since $e$ is the unique neutral element for $F$, there exist $y\in X\setminus \{e,x\}$ and $i\in \{2,\ldots,n-1\}$ such that
\[
F((i-1)\Cdot x,y,(n-i)\Cdot x) = x.
\]
Again by the fact that $n$ is odd, $i-1$ and $n-i$ are both even or both odd, and thus
\[
x = F((i-1)\Cdot x,y,(n-i)\Cdot x) \in \{G_2(x,y,x),G_2(e,y,e)\} = \{G_2(x,y,x),y\}.
\]
Since $x\neq y$, we thus have that  $G_2(x,y,x) = x$. But then
\begin{align*}
e &= G(x,x) = G(x,G_2(x,y,x)) \\
  &= G(G(x,x),G(y,x)) = G(e,G(y,x)) = G(y,x) \in \{x,y\},
\end{align*}
which contradicts our assumption that $x,y,$ and $e$ are pairwise distinct.

Now, suppose that $F$ has no neutral element and that $G$ is a reduction of F that is not idempotent.
So let $x\in X$ such that $G(x,x)\neq x$, and let $y\in X\setminus\{x,G(x,x)\}$.
By the quasitriviality of $F$ we have $F((n-1)\Cdot x,y) \in \{x,y\}$. On the other hand, by the quasitriviality (and hence idempotency) of $F$ and the associativity of $G$ we have
\begin{align*}
F((n-1)\Cdot x,y) &= F(F(n\Cdot x),(n-2)\Cdot x,y) \\
                  &= G(G_{n-2}(G_{n-1}(n\Cdot x),(n-2)\Cdot x),y) \\
                  &= G(G_{2n-3}((2n-2)\Cdot x),y) \\
                  &= G(G_{n-2}((n-1)\Cdot G(x,x)),y) \\
                  &= F((n-1)\Cdot G(x,x),y) \in \{G(x,x),y\}.
\end{align*}
Since $x, G(x,x),$ and $y$ are pairwise distinct, it follows that $F((n-1)\Cdot x,y) = y$, which implies that $G(G_{n-2}((n-1)\Cdot x),y) = y$. Similarly, we can show that
\[
G(y,G_{n-2}((n-1)\Cdot x)) = y.
\]
Also, it is not difficult to see that
\[
G(G_{n-2}((n-1)\Cdot x),G(x,x)) = G(x,x) = G(G(x,x),G_{n-2}((n-1)\Cdot x)).
\]
Furthermore, since $F$ is idempotent and reducible to $G$, we also have that
\[
 G(G_{n-2}((n-1)\Cdot x),x) = x = G(x,G_{n-2}((n-1)\Cdot x)).
\]
Thus $G_{n-2}((n-1)\Cdot x)$ is a neutral element for $G$ and therefore a neutral element for $F$, which contradicts our assumption that $F$ has no neutral element.

As both cases yield a contradiction, we conclude that $G$ must be idempotent.  The implication $\textrm{(iv)} \Rightarrow \textrm{(iii)}$ is an immediate consequence of the implication $\textrm{(iv)} \Rightarrow \textrm{(i)}$ together with Lemma \ref{lem:uni}. Thus, the proof of Theorem \ref{thm:unired} is now complete.
\end{proof}

\begin{remark}
We observe that an alternative necessary and sufficient condition for the quasitriviality of a binary reduction of an $n$-ary quasitrivial semigroup has also been provided in \cite[Corollary 3.16]{Ack}.
\end{remark}

Theorem \ref{thm:unired} together with Corollary \ref{cor:main1} imply the following result.

\begin{corollary}\label{cor:need}
Let $F\colon X^n\to X$ be an operation. Then $F$ is associative, quasitrivial, and has at most one neutral element if and only if it is reducible to an associative and quasitrivial operation $G\colon X^2 \to X$. In this case,  $G$ is defined by $G(x,y)=F(x,(n-1)\Cdot y)$.
\end{corollary}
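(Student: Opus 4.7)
The plan is to assemble the corollary from Corollary~\ref{cor:main1}, Theorem~\ref{thm:unired}, Proposition~\ref{prop:arity}, and Corollary~\ref{prop:countaqe}, with essentially no new computation required. The statement is a biconditional together with an explicit formula for $G$, so I would organise the proof in three stages: necessity, sufficiency, and the formula.

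For necessity, assume $F$ is associative, quasitrivial, and has at most one neutral element. Corollary~\ref{cor:main1} provides some associative binary reduction $G$. Since condition (iv) of Theorem~\ref{thm:unired} holds for $F$, the equivalent condition (ii) also holds, so every binary reduction of $F$ is quasitrivial; in particular $G$ is. This already gives us an associative and quasitrivial $G\colon X^2\to X$ to which $F$ reduces.

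For sufficiency, suppose $F$ is reducible to an associative and quasitrivial $G$. That $F$ is associative is immediate from the definition of reducibility. Quasitriviality of $F$ is a straightforward induction using $G_m(x_1,\ldots,x_{m+1})=G_{m-1}(x_1,\ldots,x_{m-1},G(x_m,x_{m+1}))$: since $G(x_m,x_{m+1})\in\{x_m,x_{m+1}\}$ and the inductive hypothesis yields $G_{m-1}(x_1,\ldots,x_{m-1},z)\in\{x_1,\ldots,x_{m-1},z\}$, one obtains $F(x_1,\ldots,x_n)\in\{x_1,\ldots,x_n\}$. For the ``at most one neutral element'' part, suppose for contradiction that $F$ admits two distinct neutral elements. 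By Proposition~\ref{prop:arity} this forces $n$ to be odd and $F$ to have exactly two neutral elements $e_1,e_2$. Corollary~\ref{prop:countaqe} then tells us that the binary reductions of $F$ are precisely $G^{e_1}$ and $G^{e_2}$, both lying in $A_1^2(X)\setminus Q_1^2(X)$, hence none of them quasitrivial. This contradicts the assumption that $G$ is a quasitrivial reduction of $F$.

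Finally, for the formula, since $G$ is quasitrivial, it is idempotent and so $G_{n-2}((n-1)\Cdot y)=y$ for every $y\in X$. Unfolding the definition of reducibility yields
\[
F(x,(n-1)\Cdot y)=G_{n-1}(x,(n-1)\Cdot y)=G\bigl(x,G_{n-2}((n-1)\Cdot y)\bigr)=G(x,y),
\]
which is the claimed expression. There is no genuine obstacle here; the only subtle point is noticing that quasitriviality of a single reduction $G$ is enough to trigger condition (iv) of Theorem~\ref{thm:unired} via Corollary~\ref{prop:countaqe}, so that the ``at most one neutral element'' hypothesis is both necessary and sufficient.
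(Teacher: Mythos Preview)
Your proof is correct and matches the paper's approach, which simply records that the corollary follows from Theorem~\ref{thm:unired} together with Corollary~\ref{cor:main1}.

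One streamlining is worth pointing out. For the sufficiency direction you argue by contradiction via Proposition~\ref{prop:arity} and Corollary~\ref{prop:countaqe}, but you can stay entirely within Theorem~\ref{thm:unired} (as the paper's citation suggests). The very computation you perform for the formula, using idempotency of $G$, gives $F(x,(n-1)\Cdot y)=G(x,y)$, and the symmetric computation gives $F((n-1)\Cdot x,y)=G(G_{n-2}((n-1)\Cdot x),y)=G(x,y)$ as well. Thus $F((n-1)\Cdot x,y)=F(x,(n-1)\Cdot y)$, which is condition~(v) of Theorem~\ref{thm:unired}; the implication $(\mathrm{v})\Rightarrow(\mathrm{iv})$ then yields ``at most one neutral element'' immediately, without the detour through Corollary~\ref{prop:countaqe}.
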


Recall that a \emph{weak ordering on $X$} is a binary relation $\lesssim$ on $X$ that is total and transitive (see, e.g.,
\cite{Kra} p. 14). We denote the symmetric part of $\lesssim$ by $\sim$. Also, a \emph{total ordering on $X$} is a weak ordering on $X$ that is antisymmetric.
If $(X,\lesssim)$ is a weakly ordered set, an element $a\in X$ is said to be \emph{maximal} \emph{for $\lesssim$} if $x\lesssim a$ for all $x\in X$. We denote the set of maximal elements of $X$ for $\lesssim$ by $\mathcal{M}_{\lesssim}(X)$.

Given a weak ordering $\lesssim$ on $X$, the $n$-ary \emph{maximum operation on $X$ for $\lesssim$} is the partial symmetric $n$-ary operation $\max^n_{\lesssim}$ defined on
\[
X^n\setminus\{(x_1,\ldots,x_n)\in X^n: \mbox{$|\mathcal{M}_{\lesssim}(\{x_1,\ldots,x_n\})| \geq 2$}\}
\]
by $\max^n_{\lesssim}(x_1,\ldots,x_n)=x_i$ where $i\in [n]$ is such that $x_j\lesssim x_i$ for all $j\in [n]$. If $\lesssim$ reduces to a total ordering, then clearly the operation $\max^n_{\lesssim}$ is defined everywhere on $X^n$. Also, the \emph{projection operations} $\pi_1\colon X^n\to X$ and $\pi_n\colon X^n\to X$ are respectively defined by $\pi_1(x_1,\ldots,x_n)=x_1$ and $\pi_n(x_1,\ldots,x_n)=x_n$ for all $x_1,\ldots,x_n\in X$.

Corollary \ref{cor:need} together with \cite[Theorem 1]{Lan80} and \cite[Corollary 2.3]{CouDevMar2} imply the following characterization of the class of quasitrivial $n$-ary semigroups with at most one neutral element.

\begin{theorem}\label{thm:char}
Let $F\colon X^n \to X$ be an operation. Then $F$ is associative, quasitrivial, and has at most one neutral element if and only if there exists a weak ordering $\lesssim$ on $X$ and a binary reduction $G\colon X^2 \to X$ of $F$ such that
\begin{equation}\label{eq:kimura}
G|_{A\times B} ~=~
\begin{cases}
\pi_1|_{A\times B}\hspace{1.5ex}\text{or}\hspace{1.5ex}\pi_2|_{A\times B}, & \text{if ~$A = B$},\\
\max^2_{\lesssim}|_{A\times B}, & \text{otherwise},
\end{cases}
\qquad \forall A,B\in {X/\sim}.
\end{equation}

Moreover, when $X=[k]$, then the weak ordering $\lesssim$ is uniquely defined as follows:
\begin{equation}\label{eq:deg}
x\lesssim y\quad\Leftrightarrow\quad |G^{-1}[x]|\,\leq\, |G^{-1}[y]|,\qquad x,y\in [k].
\end{equation}
\end{theorem}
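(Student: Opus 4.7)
The plan is essentially a three-step assembly: translate the $n$-ary claim into a binary one via Corollary \ref{cor:need}, apply the known classification of associative and quasitrivial binary operations to obtain the block structure \eqref{eq:kimura}, and then invoke the already-published preimage formula to handle the uniqueness clause in the moreover part.

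First, I would use Corollary \ref{cor:need} to replace the hypothesis that $F\colon X^n \to X$ is associative, quasitrivial, and has at most one neutral element by the equivalent statement that $F$ admits an associative and quasitrivial binary reduction $G\colon X^2\to X$, explicitly given by $G(x,y)=F(x,(n-1)\Cdot y)$. Both directions are furnished by that corollary: the forward direction extracts such a $G$, and the backward direction lifts the associativity, quasitriviality, and neutral-element condition from $G$ back to $F$. This reduces the first equivalence of the theorem to a purely binary statement about $G$.

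Second, I would invoke \cite[Theorem 1]{Lan80}, which characterizes associative and quasitrivial binary operations on $X$: such a $G$ exists if and only if there is a weak ordering $\lesssim$ on $X$ such that on every product $A\times B$ of $\sim$-classes, $G$ is precisely of the block form \eqref{eq:kimura}, with the off-diagonal blocks coinciding with $\max^2_\lesssim$ and each diagonal block being either $\pi_1$ or $\pi_2$. Composing the equivalence of Step~1 with this classification yields both directions of the first claim.

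Finally, for the moreover part, I would appeal to \cite[Corollary 2.3]{CouDevMar2}, which asserts that when $X=[k]$ the weak ordering $\lesssim$ attached to an associative and quasitrivial binary operation $G$ is uniquely recovered from the preimage sizes by \eqref{eq:deg}. Since this is a statement about $G$ alone, it transfers without modification to the $n$-ary setting through the reduction produced in Step~1. I do not expect any genuine obstacle; the only bookkeeping to check is that the reduction $G$ supplied by Corollary \ref{cor:need} is literally the object to which \cite[Theorem 1]{Lan80} and \cite[Corollary 2.3]{CouDevMar2} apply, which is immediate because $G$ is associative and quasitrivial by the very statement of that corollary.
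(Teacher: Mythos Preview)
Your proposal is correct and matches the paper's own argument essentially verbatim: the paper states that Theorem~\ref{thm:char} follows from Corollary~\ref{cor:need} together with \cite[Theorem~1]{Lan80} and \cite[Corollary~2.3]{CouDevMar2}, which is precisely the three-step assembly you describe.
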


Now, let us illustrate Theorem \ref{thm:char} for binary operations by means of their contour plots. We can always represent the contour plot of any operation $G\colon [k]^2\to [k]$ by fixing a total ordering on $[k]$.  In Figure~\ref{fig:2ab} (left),  we represent the contour plot of an operation $G\colon X^2\to X$ using the usual total ordering $\leq$ on $X=\{1,2,3,4\}$. {{To simplify the representation of the connected components, we omit edges that can be obtained by transitivity.}} It is not difficult to see that $G$ is quasitrivial. {{To check whether $G$ is associative, by Theorem \ref{thm:char}, it suffices to show that $G$ is of the form \eqref{eq:kimura} where the weak ordering $\lesssim$ is defined on $X$ by \eqref{eq:deg}}}. In Figure~\ref{fig:2ab} (right) we represent the contour plot of $G$ using the weak ordering $\lesssim$ on $X$ defined by \eqref{eq:deg}. We observe that $G$ is of the form \eqref{eq:kimura} for $\lesssim$ and thus by Theorem \ref{thm:char} it is associative.
\setlength{\unitlength}{3.5ex}
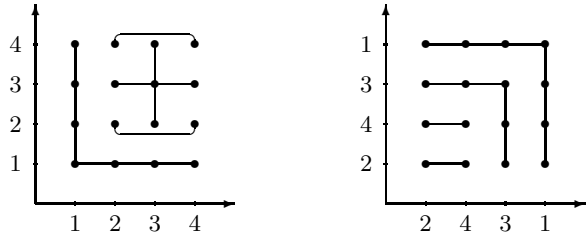
\begin{figure}[htbp]
\begin{center}
\begin{small}
\null\hspace{0.03\textwidth}
\begin{picture}(6,6)
\put(0.5,0.5){\vector(1,0){5}}\put(0.5,0.5){\vector(0,1){5}}
\multiput(1.5,0.45)(1,0){4}{\line(0,1){0.1}}%
\multiput(0.45,1.5)(0,1){4}{\line(1,0){0.1}}%
\put(1.5,0){\makebox(0,0){$1$}}\put(2.5,0){\makebox(0,0){$2$}}\put(3.5,0){\makebox(0,0){$3$}}
\put(4.5,0){\makebox(0,0){$4$}}
\put(0,1.5){\makebox(0,0){$1$}}\put(0,2.5){\makebox(0,0){$2$}}\put(0,3.5){\makebox(0,0){$3$}}
\put(0,4.5){\makebox(0,0){$4$}}
\multiput(1.5,1.5)(0,1){4}{\multiput(0,0)(1,0){4}{\circle*{0.2}}}
\drawline[1](1.5,4.5)(1.5,1.5)(4.5,1.5)\drawline[1](2.5,3.5)(4.5,3.5)\drawline[1](3.5,2.5)(3.5,4.5)
\put(3.5,4.5){\oval(2,0.5)[t]}\put(3.5,2.5){\oval(2,0.5)[b]}
\end{picture}
\hspace{0.1\textwidth}
\begin{picture}(6,6)
\put(0.5,0.5){\vector(1,0){5}}\put(0.5,0.5){\vector(0,1){5}}
\multiput(1.5,0.45)(1,0){4}{\line(0,1){0.1}}%
\multiput(0.45,1.5)(0,1){4}{\line(1,0){0.1}}%
\put(1.5,0){\makebox(0,0){$2$}}\put(2.5,0){\makebox(0,0){$4$}}\put(3.5,0){\makebox(0,0){$3$}}
\put(4.5,0){\makebox(0,0){$1$}}
\put(0,1.5){\makebox(0,0){$2$}}\put(0,2.5){\makebox(0,0){$4$}}\put(0,3.5){\makebox(0,0){$3$}}
\put(0,4.5){\makebox(0,0){$1$}}
\multiput(1.5,1.5)(0,1){4}{\multiput(0,0)(1,0){4}{\circle*{0.2}}}
\drawline[1](1.5,2.5)(2.5,2.5)\drawline[1](1.5,1.5)(2.5,1.5)\drawline[1](1.5,4.5)(4.5,4.5)(4.5,1.5)\drawline[1](1.5,3.5)(3.5,3.5)(3.5,1.5)
\end{picture}
\end{small}
\caption{An associative and quasitrivial binary operation $G$ on $X=\{1,2,3,4\}$
 whose values on $(1,1), (2,2), (3,3)$ and $(4,4)$ are $1,2,3$ and $4$, respectively.}
\label{fig:2ab}
\end{center}
\end{figure}

Let $\leq$ be a total ordering on $X$. An operation $F\colon X^n \to X$ is said to be \emph{$\leq$-preserving} if $F(x_1,\ldots,x_n)\leq F(x'_1,\ldots,x'_n)$, whenever $x_i\leq x'_i$ for all $i\in [n]$. Some associative binary operations $G\colon X^2 \to X$ are $\leq$-preserving for any total ordering on $X$ (e.g., $G(x,y) = x$ for all $x,y\in X$). However, there is no total ordering $\leq$ on $X$ for which an operation $G\in A_1^2(X)\setminus Q_1^2(X)$ is $\leq$-preserving. A typical example is the binary addition modulo 2.

\begin{proposition}
Suppose $|X|\geq 2$. If $G\in A_1^2(X)\setminus Q_1^2(X)$, then there is no total ordering $\leq$ on $X$ that is preserved by $G$.
\end{proposition}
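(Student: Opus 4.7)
The plan is to exploit the unique element $x \in X \setminus \{e\}$ with $G(x,x) = e$, whose existence is guaranteed by the assumption $G \in A_1^2(X) \setminus Q_1^2(X)$ (here $e$ denotes the unique neutral element of $G$). Since $|X| \geq 2$, such an $x$ exists, and the neutrality of $e$ gives $G(x,e) = G(e,x) = x$. The strategy is to show that the pair $\{x, e\}$ alone already obstructs compatibility with any total ordering.

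I would proceed by contradiction: suppose some total ordering $\leq$ on $X$ is preserved by $G$. By totality, either $x < e$ or $e < x$. In the first case, from $x \leq x$ and $x \leq e$, preservation of $\leq$ under $G$ yields $G(x,x) \leq G(x,e)$, that is, $e \leq x$, which contradicts $x < e$. In the second case, from $e \leq x$ and $x \leq x$, preservation yields $G(e,x) \leq G(x,x)$, that is, $x \leq e$, contradicting $e < x$. Either way we obtain a contradiction, so no such total ordering can exist.

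There is essentially no obstacle: the argument reduces to a two-element calculation in the restricted semigroup $(\{x,e\}, G|_{\{x,e\}^2}) \simeq (\mathbb{Z}_2,+)$, and the incompatibility of $(\mathbb{Z}_2,+)$ with any total ordering is what drives the proof. The only point requiring a moment of care is to correctly invoke the preservation property componentwise (rather than in a single coordinate), but once this is done the contradiction is immediate in each case. Consequently the proof is quite short and amounts essentially to the two displayed inequalities above.
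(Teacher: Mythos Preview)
Your proof is correct and essentially identical to the paper's: both identify the element $x\neq e$ with $G(x,x)=e$, assume a $\leq$-preserving total order, and in each of the two cases $x<e$ and $e<x$ derive the contradiction $e=G(x,x)\leq G(x,e)=x$ (resp.\ $x=G(e,x)\leq G(x,x)=e$). The only cosmetic difference is that the paper leaves the second case to the reader, whereas you spell it out.
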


\begin{proof}
Let $e\in X$ be the neutral element for $G$ and let $x\in X\setminus\{e\}$ such that $G(x,x)=e$. Suppose to the contrary that there exists a total ordering $\leq$ on $X$ such that $G$ is $\leq$-preserving. If  $x<e$, then  $e = G(x,x) \leq G(x,e) = x$, which contradicts our assumption. The case $x>e$ yields a similar contradiction.
\end{proof}

\begin{remark}\label{rem:ND} It is not difficult to see that any $\leq$-preserving operation $F\colon X^n \to X$ has at most one neutral element. Therefore, by Corollary \ref{cor:main1} and Theorem \ref{thm:unired} we conclude that any associative, quasitrivial, and $\leq$-preserving operation $F\colon X^n \to X$ is reducible to an associative, quasitrivial, and $\leq$-preserving operation $G\colon X^2 \to X$. For a characterization of  the class of associative, quasitrivial, and $\leq$-preserving operations $G\colon X^2 \to X$, see \cite[Theorem 4.5]{CouDevMar2}.
\end{remark}

We now provide several enumeration results that give the sizes of the classes of associative and quasitrivial operations that were considered above when $X=[k]$.
Recall that for any integers $0\leq \ell \leq k$, the \emph{Stirling number of the second kind} ${k\brace \ell}$ is defined by
\[
{k\brace \ell} ~=~ \frac{1}{\ell!}{\,}\sum_{i=0}^\ell(-1)^{\ell-i}{\ell\choose i}{\,}i^{k}.
\]
For any integer $k\geq 0$, let $q^2(k)$ (resp.\ $q^n(k)$) denote the number of associative and quasitrivial binary (resp.\ $n$-ary) operations on $[k]$. For any integer $k\geq 1$, we denote by $q_1^2(k)$ the cardinality of $Q_1^2([k])$. Also, we denote by $a_1^2(k)$ the cardinality of $A_1^2([k])$. By definition, we have $a_1^2(1)=1$. In \cite{CouDevMar2} the authors solved several enumeration problems concerning associative and quasitrivial binary operations. In particular, they computed $q^2(k)$ (see \cite[Theorem 4.1]{CouDevMar2}) as well as $q_1^2(k)$ (see \cite[Proposition 4.2]{CouDevMar2}). These sequences were also introduced in the OEIS \cite{Slo} as $A292932(k)$ and $A292933(k)$. The following result summarizes \cite[Theorem 4.1]{CouDevMar2} and \cite[Proposition 4.2]{CouDevMar2}.

\begin{proposition}\label{prop:CDM}
For any integer $k\geq 0$, we have the closed-form expression
\begin{equation*}\label{eq:Nk}
q^2(k) ~=~ \sum_{i=0}^k 2^i{\,}\sum_{\ell=0}^{k-i}(-1)^{\ell}{\,}{k\choose \ell}{k-\ell\brace i}{\,}(i+\ell)!{\,},\qquad k\geq 0,
\end{equation*}
where $q^2(0)=q^2(1)=1$. Moreover, for any integer $k\geq 1$, we have $q_1^2(k)=k{\,}q^2(k-1)$.
\end{proposition}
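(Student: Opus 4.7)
The plan is to handle the two formulas separately, both via the structural description of associative and quasitrivial binary operations. Because $n=2$ is even, Proposition~\ref{prop:arity}(a) ensures that any such operation has at most one neutral element, so Theorem~\ref{thm:char} applies to every associative and quasitrivial $G\colon [k]^2 \to [k]$: it is encoded by a weak ordering $\lesssim$ on $[k]$ together with a choice of $G|_{A \times A} \in \{\pi_1, \pi_2\}$ for each non-singleton $\sim$-class $A$.

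For the closed-form expression for $q^2(k)$, the plan is to re-read this encoding as a decorated ordered set partition of $[k]$ in which each block of size $1$ contributes weight $1$ (since $\pi_1|_A = \pi_2|_A$ when $|A|=1$) and each block of size $\geq 2$ contributes weight $2$. The exponential generating function of a single such decorated block is then $x + 2(e^x - 1 - x)$, and passing to ordered sequences of blocks yields
\begin{equation*}
\sum_{k \geq 0} q^2(k)\,\frac{x^k}{k!} ~=~ \frac{1}{1 - \bigl(x + 2(e^x - 1 - x)\bigr)} ~=~ \frac{1}{3 + x - 2 e^x}.
\end{equation*}
I would extract $[x^k/k!]$ by writing the right-hand side as $\sum_J (2 e^x - 2 - x)^J$, expanding via the binomial theorem to peel off the $(-x)^\ell$ factor, applying the Stirling identity $(e^x - 1)^i / i! = \sum_m {m \brace i}\, x^m / m!$, and reindexing with $i = J - \ell$ so that the exponent on $2$ becomes $i$ and the factorial becomes $(i+\ell)!$; this produces exactly the claimed double sum.

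For the identity $q_1^2(k) = k \cdot q^2(k-1)$, the plan is to establish the bijection
\begin{equation*}
Q_1^2([k]) ~\longleftrightarrow~ \bigl\{(e, G') : e \in [k],\ G'\text{ associative and quasitrivial on } [k]\setminus\{e\} \bigr\}
\end{equation*}
sending $G$ to $(e_G, G|_{([k] \setminus \{e_G\})^2})$, where $e_G$ is the unique neutral element of $G$, and inverting it by extending any such $G'$ via $G(e, x) = G(x, e) = x$ for all $x \in [k]$ and $G(x, y) = G'(x, y)$ otherwise. Associativity of the extension reduces, by case analysis on how many of the three arguments equal $e$, either to the neutral-element identity or to the associativity of $G'$; quasitriviality and the uniqueness of the neutral element are then immediate. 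Counting gives $k$ choices for $e$ times $q^2(k-1)$ choices for $G'$.

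The main obstacle is the coefficient extraction in the first part: three layered summations (binomial, Stirling, factorial) must be untangled carefully so that the reindexing $J = i + \ell$ lands on the exact form claimed. Everything else is mechanical once the characterization from Theorem~\ref{thm:char} and the restriction/extension machinery are in place.
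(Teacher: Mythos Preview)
The paper does not prove this proposition at all: it is quoted from \cite[Theorem~4.1 and Proposition~4.2]{CouDevMar2} and stated without argument. Your proposal therefore supplies a proof where the paper merely cites one, and the argument you outline is correct.

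For the first formula, the encoding via Theorem~\ref{thm:char} is exactly right: an associative quasitrivial $G\colon[k]^2\to[k]$ corresponds bijectively to an ordered set partition of $[k]$ together with a choice of $\pi_1$ or $\pi_2$ on each non-singleton block (uniqueness of the weak ordering is the ``moreover'' clause of Theorem~\ref{thm:char}), so the block EGF is $x+2(e^x-1-x)=2(e^x-1)-x$ and the full EGF is $1/(3+x-2e^x)$. Your extraction works: writing $(2(e^x-1)-x)^J$, expanding binomially in $(-x)^\ell$, using $(e^x-1)^i=\sum_m i!\,{m\brace i}\,x^m/m!$, and reindexing $J=i+\ell$ turns $\binom{J}{\ell}\,i!$ into $(i+\ell)!/\ell!$ and the remaining $k!/(\ell!(k-\ell)!)$ into $\binom{k}{\ell}$, yielding precisely the stated double sum.

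For $q_1^2(k)=k\,q^2(k-1)$, your remove-the-neutral/adjoin-a-neutral bijection is sound: quasitriviality forces $G(x,y)\in\{x,y\}\subseteq[k]\setminus\{e_G\}$ whenever $x,y\neq e_G$, so the restriction is a genuine operation on $[k]\setminus\{e_G\}$, and the case analysis on how many arguments equal $e$ verifies associativity of the extension. The count $k\cdot q^2(k-1)$ follows.
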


\begin{proposition}\label{prop:ae}
For any integer $k\geq 2$, we have $a_1^2(k) = kq^2(k-1) + k(k-1)q^2(k-2)$.
\end{proposition}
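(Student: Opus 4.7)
The plan is to partition $A_1^2([k])$ as the disjoint union $Q_1^2([k])\sqcup \bigl(A_1^2([k])\setminus Q_1^2([k])\bigr)$ and to count the two parts separately. By Proposition \ref{prop:CDM} the first part has size $q_1^2(k)=k{\,}q^2(k-1)$, so the task reduces to establishing that
\[
\bigl|A_1^2([k])\setminus Q_1^2([k])\bigr| ~=~ k(k-1){\,}q^2(k-2).
\]

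To do this, I would set up a bijection between $A_1^2([k])\setminus Q_1^2([k])$ and the set of triples $(e,x,G_0)$, where $e\in [k]$, $x\in [k]\setminus\{e\}$, and $G_0\colon([k]\setminus\{e,x\})^2\to [k]\setminus\{e,x\}$ is an associative and quasitrivial operation. The forward map sends $G$ to the triple consisting of its (unique) neutral element $e_G$, the (unique by the definition of $A_1^2([k])$) element $x_G\in [k]\setminus\{e_G\}$ with $G(x_G,x_G)=e_G$, and the restriction $G|_{([k]\setminus\{e_G,x_G\})^2}$. The fact that this restriction is indeed associative and quasitrivial is exactly condition (b) of Proposition \ref{prop:obv}.

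The inverse map takes $(e,x,G_0)$ and extends $G_0$ to $[k]$ by declaring $e$ to be a neutral element, setting $G(x,x)=e$, and setting $G(x,y)=G(y,x)=y$ for every $y\in[k]\setminus\{e,x\}$. The key verification is that this extension belongs to $A_1^2([k])\setminus Q_1^2([k])$, which by Proposition \ref{prop:obv} amounts to checking: (a) $G|_{\{e,x\}^2}\simeq(\mathbb{Z}_2,+)$ via $e\mapsto 0$, $x\mapsto 1$, which is immediate from the prescription; (b) $G|_{([k]\setminus\{e,x\})^2}=G_0$ is associative and quasitrivial by hypothesis; and (c) for every $z\in [k]\setminus\{e,x\}$, the element $z$ is an annihilator for $G|_{\{e,x,z\}^2}$, which follows since $G(e,z)=G(z,e)=z$, $G(x,z)=G(z,x)=z$, and $G(z,z)=G_0(z,z)=z$ by quasitriviality of $G_0$.

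This bijection yields $|A_1^2([k])\setminus Q_1^2([k])|=k(k-1){\,}q^2(k-2)$ by multiplying the number of choices for $e$, $x$, and $G_0$, and summing with $q_1^2(k)$ gives the claimed formula. The only mildly delicate point is the boundary case $k=2$, where $[k]\setminus\{e,x\}=\varnothing$ and one must use the convention $q^2(0)=1$ recorded in Proposition \ref{prop:CDM}; otherwise the argument is a direct bookkeeping consequence of Propositions \ref{prop:obv} and \ref{prop:CDM}, so I do not anticipate a substantive obstacle.
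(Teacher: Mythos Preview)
Your proof is correct and follows essentially the same strategy as the paper: partition $A_1^2([k])$ into $Q_1^2([k])$ and its complement, and count the complement by parameterizing its elements. The only cosmetic difference is that the paper removes just $x$ and observes that $G|_{([k]\setminus\{x\})^2}\in Q_1^2([k]\setminus\{x\})$ (giving $k\,q_1^2(k-1)$), whereas you remove both $e$ and $x$ and land directly in the associative quasitrivial operations on $k-2$ elements (giving $k(k-1)\,q^2(k-2)$); since $q_1^2(k-1)=(k-1)q^2(k-2)$ these are the same count.
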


\begin{proof}
We already have that $Q_1^2([k])\subseteq A_1^2([k])$. Now, let us show how to construct an operation $G\in A_1^2([k])\setminus Q_1^2([k])$. There are $k$ ways to choose the element $x\in [k]$ such that $G(x,x) = e$ and $G(x,y) = G(y,x) = y$ for all $y\in [k]\setminus \{x,e\}$. Then we observe that the restriction of $G$ to $([k]\setminus\{x\})^2$ belongs to $Q_1^2([k]\setminus\{x\})$, so we have $q_1^2(k-1)$ possible choices to construct this restriction. This shows that $a_1^2(k)=q_1^2(k) + kq_1^2(k-1)$. Finally, by Proposition \ref{prop:CDM} we conclude that $a_1^2(k)=kq^2(k-1) + k(k-1)q^2(k-2)$.
\end{proof}

For any integer $k\geq 1$ let $q^n_1(k)$ (resp.\ $q^n_{0}(k)$) denote the number of associative and quasitrivial $n$-ary operations that have exactly one neutral element (resp.\ that have no neutral element) on $[k]$. Also, for any integer $k\geq 1$, let $q^n_{2}(k)$ denote the number of associative and quasitrivial $n$-ary operations that have two neutral elements on $[k]$. Clearly, $q^n(1)=q^n_1(1)=1$ and $q^n_{2}(1)=0$. The following proposition provides explicit forms of the latter sequences. Table \ref{tab:q} below provides the first few values of all the previously considered sequences. {{In view of Corollary \ref{prop:countaqe}, we only consider the case where $n$ is odd for $q^n_2(k)$ and $q^n(k)$.}}
\begin{proposition}\label{prop:qn}
For any integer $k\geq 1$ we have $q^n_1(k) = q_1^2(k)$ and $q^n_{0}(k) = q^2(k)-q_1^2(k)$. Also, for any integer $k\geq 2$ we have
\[
q^n_{2}(k) = \left\{
    \begin{matrix}
        0 & \mbox{if}~ n ~\mbox{is even} \\
        \binom{k}{2}q^2(k-2) & \mbox{if}~ n ~\mbox{is odd.}
    \end{matrix}
\right.
\]
and
\[
q^n(k) = \left\{
    \begin{matrix}
        q^2(k) & \mbox{if}~ n ~\mbox{is even} \\
        q^2(k) + \binom{k}{2}q^2(k-2) & \mbox{if}~ n ~\mbox{is odd.}
    \end{matrix}
\right.
\]
\end{proposition}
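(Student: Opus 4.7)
The plan is to derive the four formulas from Corollary~\ref{cor:need}, Corollary~\ref{prop:countaqe}, Proposition~\ref{prop:arity}, and the count $|A_1^2([k])\setminus Q_1^2([k])|$ that is implicit in the proof of Proposition~\ref{prop:ae}. First I would use Corollary~\ref{cor:need} to set up the bijection
\[
F \longmapsto \bigl(G_F:(x,y)\mapsto F(x,(n-1)\Cdot y)\bigr)
\]
between associative, quasitrivial $n$-ary operations on $[k]$ having at most one neutral element and \emph{all} associative, quasitrivial binary operations on $[k]$. Since an element $e\in[k]$ is a neutral element of $F$ if and only if it is a neutral element of $G_F$, this bijection restricts further, giving $q^n_1(k)=q_1^2(k)$ and, because a binary associative operation has at most one neutral element, $q^n_0(k)=q^2(k)-q_1^2(k)$.

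For $q^n_2(k)$, if $n$ is even then Proposition~\ref{prop:arity}(a) immediately yields $q^n_2(k)=0$. When $n$ is odd, Corollary~\ref{prop:countaqe} tells us that each $F$ with exactly two neutral elements $e_1,e_2$ is reducible to exactly two operations $G^{e_1},G^{e_2}\in A_1^2([k])\setminus Q_1^2([k])$, and conversely every $G\in A_1^2([k])\setminus Q_1^2([k])$ arises as such a reduction of $F=G_{n-1}$: by Proposition~\ref{prop:blue} such an $F$ is quasitrivial, while a direct computation using the parity of $n$ shows that both the neutral element $e$ of $G$ and the unique $x$ with $G(x,x)=e$ are neutral elements for $F$. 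This yields a $2$-to-$1$ surjection from $A_1^2([k])\setminus Q_1^2([k])$ onto the class enumerated by $q^n_2(k)$. From the proof of Proposition~\ref{prop:ae} we read off
\[
|A_1^2([k])\setminus Q_1^2([k])| \,=\, k\,q_1^2(k-1) \,=\, k(k-1)\,q^2(k-2),
\]
and dividing by two gives $q^n_2(k)=\binom{k}{2}q^2(k-2)$. The formula for $q^n(k)$ then follows by summing the disjoint contributions $q^n(k)=q^n_0(k)+q^n_1(k)+q^n_2(k)$, which by Proposition~\ref{prop:arity} is a genuine partition: it collapses to $q^2(k)$ when $n$ is even and to $q^2(k)+\binom{k}{2}q^2(k-2)$ when $n$ is odd.

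The only non-routine step is verifying that $F=G_{n-1}$ really has two neutral elements for every $G\in A_1^2([k])\setminus Q_1^2([k])$ when $n$ is odd. Writing $e$ for the neutral element of $G$ and $x$ for the unique element with $G(x,x)=e$, one must show $F((i-1)\Cdot x,y,(n-i)\Cdot x)=y$ for all $y\in[k]$ and all $i\in[n]$. This reduces to iterated evaluations of $G$ on strings of $x$'s, $e$'s and possibly one $y$, all of which collapse cleanly using $G(x,x)=e$, $G(e,e)=e$, the neutrality of $e$ for $G$, and the $A_1^2$-identity $G(x,z)=G(z,x)=z$ for $z\notin\{x,e\}$; the parity of $n$ is used each time to ensure that the iterated $G$ applied to a run of $x$'s and $e$'s returns to $x$ rather than to $e$.
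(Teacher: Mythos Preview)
Your proof is correct and follows essentially the same approach as the paper's. The paper cites Theorem~\ref{thm:unired} directly to obtain $q^n_1(k)=q_1^2(k)$ and $q^n_0(k)=q^2(k)-q_1^2(k)$, while you phrase this through the explicit bijection of Corollary~\ref{cor:need}; likewise, the paper obtains $q^n_2(k)=\tfrac{1}{2}\bigl(a_1^2(k)-q_1^2(k)\bigr)$ by quoting Corollary~\ref{prop:countaqe} together with Propositions~\ref{prop:CDM} and~\ref{prop:ae}, whereas your final paragraph unpacks the sufficiency direction of Corollary~\ref{prop:countaqe} by hand---this extra verification is fine but not needed once you have invoked that corollary.
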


\begin{proof}
By Theorem \ref{thm:unired} we have that the number of associative and quasitrivial $n$-ary operations that have exactly one neutral element (resp.\ that have no neutral element) on $[k]$ is exactly the number of associative and quasitrivial binary operation on $[k]$ that have a neutral element (resp.\ that have no neutral element). This number is given by $q_1^2(k)$ (resp.\ $q^2(k)-q_1^2(k)$). Also, if $n$ is even, then by Theorem \ref{thm:ack1}(a) and Proposition \ref{prop:arity}(a) we conclude that $q^n(k) = q^2(k)$ and $q_2^n(k) = 0$.

{{Now, suppose that $n$ is odd}}. By Corollary \ref{prop:countaqe} and Propositions \ref{prop:CDM} and \ref{prop:ae} we have that $q^n_{2}(k) = \frac{a_1^2(k)-q_1^2(k)}{2} = \binom{k}{2}q^2(k-2)$. Finally, by Proposition \ref{prop:arity} we have that $q^n(k) = q_{0}^n(k) + q^n_1(k) + q^n_{2}(k) = q^2(k) + \binom{k}{2}q^2(k-2)$.
\end{proof}

\begin{table}[htbp]\label{tab:1}
\[
\begin{array}{|c|rrrrrr|}
\hline k & q^2(k) & q_1^2(k) & q^n_{0}(k) & q^n_{2}(k) & q^n(k) & a_1^2(k)\\
\hline 1 & 1 & 1 & 0 & 0 & 1 & 1\\
2 & 4 & 2 & 2 & 1 & 5 & 4\\
3 & 20 & 12 & 8 & 3 & 23 & 18\\
4 & 138 & 80 & 58 & 24 & 162 & 128\\
5 & 1{\,}182 & 690 & 492 & 200 & 1{\,}382 & 1{\,}090\\
6 & 12{\,}166 & 7{\,}092 & 5{\,}074 & 2{\,}070 & 14{\,}236 & 11{\,}232\\
\hline
\mathrm{OEIS}^{\mathstrut} & \mathrm{A292932} & \mathrm{A292933} & \mathrm{A308352} & \mathrm{A308354} & \mathrm{A308362} & \mathrm{A308351}\\
\hline
\end{array}
\]
\caption{First few values of $q^2(k)$, $q_1^2(k)$, $ q^n_{0}(k)$, $q^n_{2}(k)$, $q^n(k)$ and $a_1^2(k)$}
\label{tab:q}
\end{table}

\section{Symmetric operations}

In this section we refine our previous results to the subclass of associative and quasitrivial operations that are symmetric, and present further enumeration results accordingly.

We first recall and establish  some auxiliary results.

\begin{fact}\label{lem:surj}
Suppose that $F\colon X^n \to X$ is associative and surjective.
If it is reducible to an associative operation $G\colon X^2 \to X$, then $G$ is surjective.
\end{fact}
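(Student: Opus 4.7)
The plan is to simply unfold the definition of reducibility and transfer surjectivity from $F$ to $G$ via the outermost application of $G$ in the iterated expression. The intuition is that whatever element $F$ produces, it produces as the value of a single application of $G$ to two arguments, so the image of $F$ is contained in the image of $G$.

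More precisely, I would first record, by a straightforward induction on $m$, that the iterates $G_m$ in Definition \ref{de:FH5} satisfy
\[
G_m(x_1,\ldots,x_{m+1}) \;=\; G\bigl(x_1,\, G_{m-1}(x_2,\ldots,x_{m+1})\bigr)
\]
for all $2 \leq m \leq n-1$ and all $x_1,\ldots,x_{m+1}\in X$. (This is just the observation that associativity, applied to $G$, lets one rewrite the right-folded composition as a left-peel, but it is not even strictly needed: all I require is that $G_{n-1}(x_1,\ldots,x_n)$ can be written in the form $G(a,b)$ for some $a,b\in X$ depending on the $x_i$, which is immediate from the recursion $G_{n-1}(x_1,\ldots,x_n)=G_{n-2}(x_1,\ldots,x_{n-2},G(x_{n-1},x_n))$ peeled all the way down.)

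Once this is in hand, I would fix an arbitrary $z\in X$ and use surjectivity of $F$ to obtain $x_1,\ldots,x_n\in X$ with $F(x_1,\ldots,x_n)=z$. Reducibility of $F$ to $G$ then gives $z = G_{n-1}(x_1,\ldots,x_n) = G(a,b)$ for some $a,b\in X$ constructed from the $x_i$ via iterated applications of $G$. Hence $z$ lies in the image of $G$, and since $z$ was arbitrary, $G$ is surjective.

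I do not anticipate any real obstacle: the proof is essentially a one-line unfolding of the reduction. The only mildly delicate point is notational — making sure the recursive definition of $G_m$ is unfolded in the correct direction so that one really does exhibit the output of $F$ as a single $G$-value — and this is settled by the recursion given in Definition \ref{de:FH5}.
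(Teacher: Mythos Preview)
Your argument is correct: unfolding the recursion $G_m(x_1,\ldots,x_{m+1})=G_{m-1}(x_1,\ldots,x_{m-1},G(x_m,x_{m+1}))$ down to $G_1=G$ shows that every value of $F=G_{n-1}$ is already a value of $G$, so surjectivity of $F$ forces surjectivity of $G$. The paper records this statement as a Fact and gives no proof, so there is nothing to compare against; your one-line unfolding is precisely the intended justification.
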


\begin{lemma}[see {\cite[Lemma 3.6]{DevKiMar17}}]\label{lem:sym}
Suppose that $F\colon X^n \to X$ is associative, symmetric, and reducible to an associative and surjective operation $G\colon X^2 \to X$. Then $G$ is symmetric.
\end{lemma}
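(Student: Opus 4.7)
My plan is to handle the trivial case $n = 2$ separately (where $F = G$ is symmetric by hypothesis) and, for $n \geq 3$, to reduce the conclusion to showing that every element of $X$ commutes with every other element of $X$ under $G$.

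For $n \geq 3$, I will exploit the symmetry of $F$ applied to three specific rearrangements of a generic $n$-tuple. Fix $x, y \in X$ and $x_3, \ldots, x_n \in X$, and set $\rho := G_{n-3}(x_3, \ldots, x_n)$ (with the convention $\rho = x_3$ when $n = 3$). Using the reduction $F = G_{n-1}$ together with associativity of $G$ to split the $n$-fold composition at the boundary between the first two and the last $n-2$ coordinates, the three instances of symmetry of $F$
\begin{align*}
F(x,y,x_3,\ldots,x_n) &= F(y,x,x_3,\ldots,x_n),\\
F(x_3,\ldots,x_n,x,y) &= F(x_3,\ldots,x_n,y,x),\\
F(x,y,x_3,\ldots,x_n) &= F(x_3,\ldots,x_n,y,x)
\end{align*}
become, respectively,
\begin{align*}
\text{(I)}\quad & G(G(x,y),\rho) \;=\; G(G(y,x),\rho),\\
\text{(II)}\quad & G(\rho,G(x,y)) \;=\; G(\rho,G(y,x)),\\
\text{(III)}\quad & G(G(x,y),\rho) \;=\; G(\rho,G(y,x)).
\end{align*}

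Next I observe that surjectivity of $G$ propagates to surjectivity of every $G_m$: given $c \in X$, write $c = G(a,b)$ by surjectivity of $G$, and inductively $b = G_{m-1}(y_1,\ldots,y_m)$, so that $c = G_m(a,y_1,\ldots,y_m)$ after associative regrouping. Hence $\rho$ ranges over all of $X$ in (I)--(III). Now, swapping the roles of $x$ and $y$ in (III) yields $G(G(y,x),\rho) = G(\rho,G(x,y))$, which combined with (II) gives
\[
G(G(y,x),\rho) \;=\; G(\rho,G(y,x))\qquad\text{for every } \rho\in X.
\]
Thus every element of the form $G(y,x)$ commutes with every $\rho \in X$ under $G$, and since $G$ is surjective, every element of $X$ arises as some $G(y,x)$. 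Hence $G$ is commutative, i.e., symmetric.

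The only real subtlety lies in choosing the right symmetry identities. Identities (I) and (II) on their own only record that $G(x,y)$ and $G(y,x)$ have the same right- and left-actions on $X$, which in a general associative semigroup does not force $G(x,y) = G(y,x)$. Identity (III), obtained from a permutation that simultaneously swaps the first two and the last two coordinates of $F$, is what allows one to migrate $G(y,x)$ across a product and thereby triggers the centrality condition that closes the argument. Everything else is routine bookkeeping: careful regrouping via associativity of $G$ and the inductive surjectivity of $G_m$.
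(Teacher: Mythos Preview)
Your proof is correct. Note, however, that the paper does not supply its own proof of this lemma; it merely cites \cite[Lemma~3.6]{DevKiMar17}, so there is no in-paper argument to compare against.

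That said, your route is more elaborate than necessary. Once you have established that $G_{n-2}$ is surjective (which you do), a single application of the symmetry of $F$ finishes the job: given any $c\in X$, write $c=G_{n-2}(y_1,\ldots,y_{n-1})$; then for every $x\in X$,
\[
G(x,c)=G_{n-1}(x,y_1,\ldots,y_{n-1})=F(x,y_1,\ldots,y_{n-1})=F(y_1,\ldots,y_{n-1},x)=G_{n-1}(y_1,\ldots,y_{n-1},x)=G(c,x).
\]
Your identities (I)--(III) accomplish the same thing indirectly, first showing that each $G(y,x)$ is central and then invoking surjectivity of $G$ a second time. This works, but the detour through centrality is avoidable: the permutation that moves the first coordinate to the last already does everything. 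Your closing commentary about identities (I) and (II) being insufficient on their own is accurate, but the moral is simpler than you suggest --- one well-chosen cyclic shift suffices, and no combination of identities is really needed.
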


\begin{proposition}\label{prop:sym}
If $F\colon X^n \to X$ is associative, quasitrivial, and symmetric, then it is reducible to an associative, surjective, and symmetric operation $G\colon X^2 \to X$. Moreover, if $X=[k]$, then $F$ has a neutral element.
\end{proposition}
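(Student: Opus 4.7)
The plan is to derive each assertion from the toolkit already assembled in the paper. For the first claim, I would start with Corollary \ref{cor:main1} to obtain an associative binary reduction $G\colon X^2\to X$ of $F$. Quasitriviality of $F$ entails idempotency, hence $F$ is surjective (each $x=F(n\Cdot x)$). Fact \ref{lem:surj} then yields surjectivity of $G$, and Lemma \ref{lem:sym} upgrades $G$ to a symmetric operation. This settles the first assertion.

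For the second assertion, assume $X=[k]$. By Proposition \ref{prop:arity}, $F$ has at most two neutral elements; if it has exactly two there is nothing to prove, so I will focus on the case where $F$ has at most one neutral element. Under this hypothesis, Theorem \ref{thm:unired} ensures that the reduction $G$ constructed above is quasitrivial. Thus $G\colon [k]^2\to [k]$ is associative, quasitrivial, and symmetric. Since every binary associative operation has at most one neutral element, Theorem \ref{thm:char} applies to $G$ (viewed as its own binary reduction) and yields a weak ordering $\lesssim$ on $[k]$ such that $G$ is of the form \eqref{eq:kimura}.

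The key step is to argue that symmetry of $G$ collapses $\lesssim$ to a total ordering: on any $\sim$-class $A$ with $|A|\geq 2$, the restriction $G|_{A\times A}$ equals one of the projections $\pi_1,\pi_2$, which is not symmetric. Hence every $\sim$-class is a singleton, $\lesssim$ is a total ordering, and $G=\max^2_{\lesssim}$. Finiteness of $[k]$ then ensures that $\lesssim$ admits a minimum element $e$, which a direct check shows to be the neutral element for $G$. Invoking item (ii) in the list preceding Proposition \ref{prop:unired2}, this $e$ is also a neutral element for $F$, completing the argument.

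The only substantive obstacle is the finite-set step at the end: one must pass from ``symmetric, associative, quasitrivial binary operation on $[k]$'' to ``has a neutral element''. Once symmetry collapses the weak ordering supplied by Theorem \ref{thm:char} into a total ordering, finiteness immediately provides the minimum element needed, and this is why the hypothesis $X=[k]$ cannot be dropped.
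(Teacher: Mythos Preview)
Your argument is correct. The first paragraph matches the paper's proof essentially verbatim (you even make explicit the implicit step that quasitriviality gives idempotency and hence surjectivity of $F$, needed to invoke Fact~\ref{lem:surj}).

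For the ``moreover'' part, the paper and you make the same case split, just phrased from opposite directions: the paper splits on whether $G$ is quasitrivial and, in the non-quasitrivial case, uses Theorem~\ref{thm:unired} and Proposition~\ref{prop:arity} to deduce that $F$ has two neutral elements; you split on whether $F$ has two neutral elements and, in the ``at most one'' case, use Theorem~\ref{thm:unired} to deduce that $G$ is quasitrivial. The only substantive difference is how the quasitrivial-$G$ case is dispatched. The paper simply cites \cite[Theorem~3.3]{CouDevMar2} to conclude that an associative, quasitrivial, symmetric binary operation on $[k]$ has a neutral element. You instead rederive this inside the paper: apply Theorem~\ref{thm:char} to $G$, observe that symmetry forces every $\sim$-class to be a singleton (since $\pi_1,\pi_2$ are not symmetric on a class of size $\geq 2$), whence $\lesssim$ is total and $G=\max^2_{\lesssim}$, and then take the minimum of the finite chain as the neutral element. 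Your route is slightly longer but more self-contained relative to this paper; the paper's route is shorter because the needed fact was already packaged in~\cite{CouDevMar2}.
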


\begin{proof}
By Corollary \ref{cor:main1},  $F$ is reducible to an associative operation $G\colon X^2 \to X$. By Fact \ref{lem:surj} and Lemma \ref{lem:sym}, it follows that $G$ is surjective and symmetric.

For the moreover part, we only have two cases to consider.
\begin{itemize}
\item If $G$ is quasitrivial, then by \cite[Theorem 3.3]{CouDevMar2} it follows that $G$ has a neutral element, and thus $F$ also has a neutral element.
\item If $G$ is not quasitrivial, then  by Proposition \ref{prop:arity} and Theorem \ref{thm:unired} $F$ has in fact two neutral elements.\qedhere
\end{itemize}
\end{proof}

\begin{proposition}[see {\cite[Corollary 4.10]{Ack}}]\label{thm:ack}
An operation $F\colon X^n \to X$ is associative, quasitrivial, symmetric, and reducible to an associative and quasitrivial operation $G\colon X^2 \to X$ if and only if there exists a total ordering $\preceq$ on $X$ such that $F = max^n_{\preceq}$.
\end{proposition}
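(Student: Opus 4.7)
The plan is to reduce the statement to the known characterization of associative, quasitrivial, symmetric \emph{binary} operations as maxima with respect to a total ordering, and then lift this to the $n$-ary case via reducibility.

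For the sufficiency direction, if $F=\max^n_{\preceq}$ for some total ordering $\preceq$ on $X$, then it is immediate from the definition that $F$ is associative, quasitrivial, and symmetric. Moreover, $F$ is reducible to $G=\max^2_{\preceq}$, since iterating the binary maximum $n-1$ times yields the $n$-ary maximum, and $G$ is itself associative and quasitrivial.

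For the necessity direction, assume $F\colon X^n\to X$ is associative, quasitrivial, symmetric, and reducible to an associative and quasitrivial $G\colon X^2\to X$. First I would apply Fact \ref{lem:surj} and Lemma \ref{lem:sym} (exactly as in the proof of Proposition \ref{prop:sym}) to conclude that $G$ is surjective and symmetric. Thus $G$ is an associative, quasitrivial, and symmetric binary operation. At this point I would invoke the known characterization of such binary operations from \cite{CouDevMar2} (the same result that is used in Proposition \ref{prop:sym} to guarantee a neutral element): every associative, quasitrivial, symmetric binary operation on $X$ is of the form $\max^2_{\preceq}$ for some total ordering $\preceq$ on $X$.

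Finally, since $F$ is reducible to $G=\max^2_{\preceq}$, Definition \ref{de:FH5} gives, for every $x_1,\ldots,x_n\in X$,
\[
F(x_1,\ldots,x_n) ~=~ G_{n-1}(x_1,\ldots,x_n) ~=~ \max{}^n_{\preceq}(x_1,\ldots,x_n),
\]
by a straightforward induction on $n$ using the associativity of the binary maximum. This yields $F=\max^n_{\preceq}$, as desired.

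The only non-routine ingredient is the classification of associative, quasitrivial, and symmetric binary operations as maxima with respect to total orderings; this is the main obstacle in the sense that the whole proof hinges on it, but it is already available in \cite{CouDevMar2} and is the same tool invoked for the moreover part of Proposition \ref{prop:sym}, so no new work is required here. Everything else is bookkeeping: transferring symmetry and surjectivity from $F$ to $G$ (via Fact \ref{lem:surj} and Lemma \ref{lem:sym}), and then unfolding the reduction to pass from the binary maximum to the $n$-ary one.
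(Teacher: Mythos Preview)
The paper does not supply its own proof of this proposition; it is quoted from \cite[Corollary 4.10]{Ack}. So there is no in-paper argument to compare against.

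Your proof is correct and follows the natural route. One small point worth making explicit: the characterization you invoke from \cite{CouDevMar2} (that every associative, quasitrivial, symmetric binary operation is $\max^2_{\preceq}$ for some total ordering $\preceq$) holds for arbitrary $X$, not only for $X=[k]$; in the paper, \cite[Theorem 3.3]{CouDevMar2} is cited only inside the finite case of Proposition~\ref{prop:sym}, so a reader might wonder. If you want the argument to be self-contained, you can simply define $x\preceq y$ iff $G(x,y)=y$ and check directly that quasitriviality gives totality and reflexivity, symmetry gives antisymmetry, and associativity gives transitivity. With that noted, everything else in your proof (the use of Fact~\ref{lem:surj} to get surjectivity of $G$, Lemma~\ref{lem:sym} to get symmetry of $G$, and the unfolding of $G_{n-1}$ to obtain $\max^n_{\preceq}$) is routine and correct.
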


\begin{proposition}\label{thm:deg}
A quasitrivial operation $F\colon [k]^n\to [k]$ is associative, symmetric, and reducible to an associative and quasitrivial operation $G\colon [k]^2 \to [k]$ if and only if  $|F^{-1}| = (1,2^n-1,\ldots,k^n-(k-1)^n)$.
\end{proposition}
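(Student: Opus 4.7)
The plan is to reduce both directions to Proposition~\ref{thm:ack}, which says that the class of operations in the statement is exactly $\{\max^n_{\preceq}\mid \preceq \text{ total ordering on }[k]\}$.

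For the necessity direction, assume $F$ is associative, symmetric, quasitrivial, and reducible to an associative and quasitrivial binary operation. By Proposition~\ref{thm:ack} there is a total ordering $\preceq$ on $[k]$ with $F = \max^n_{\preceq}$. Enumerating the elements of $[k]$ as $y_1 \prec y_2 \prec \cdots \prec y_k$, the preimage $F^{-1}[y_\ell]$ is exactly the set of tuples in $\{y_1,\ldots,y_\ell\}^n$ having at least one component equal to $y_\ell$, so by Lemma~\ref{lem:easy} its size is $\ell^n-(\ell-1)^n$. Since these values are strictly increasing in $\ell$, the preimage sequence is precisely $(1,2^n-1,\ldots,k^n-(k-1)^n)$.

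For the sufficiency direction, I would proceed by induction on $k$. The case $k=1$ is immediate. For the inductive step, suppose $F\colon [k]^n\to[k]$ is quasitrivial with $|F^{-1}|=(1,2^n-1,\ldots,k^n-(k-1)^n)$. Then there is a unique $z\in[k]$ with $|F^{-1}[z]|=k^n-(k-1)^n$, and by Proposition~\ref{prop:an} this $z$ is an annihilator. Consequently, any tuple whose value under $F$ is not $z$ must lie in $([k]\setminus\{z\})^n$; combined with quasitriviality, this shows that $F$ restricts to an operation $F'\colon ([k]\setminus\{z\})^n\to[k]\setminus\{z\}$, which is clearly quasitrivial, and satisfies $|F'^{-1}[x]|=|F^{-1}[x]|$ for every $x\neq z$. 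Hence $|F'^{-1}|=(1,2^n-1,\ldots,(k-1)^n-(k-2)^n)$.

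By the induction hypothesis $F'$ is associative, symmetric, and reducible to an associative and quasitrivial binary operation, so by Proposition~\ref{thm:ack} there is a total ordering $\preceq'$ on $[k]\setminus\{z\}$ with $F'=\max^n_{\preceq'}$. Extend $\preceq'$ to a total ordering $\preceq$ on $[k]$ by declaring $x\prec z$ for all $x\neq z$. Then, for any tuple $(x_1,\ldots,x_n)\in[k]^n$ avoiding $z$, $F(x_1,\ldots,x_n)=F'(x_1,\ldots,x_n)=\max^n_{\preceq'}(x_1,\ldots,x_n)=\max^n_{\preceq}(x_1,\ldots,x_n)$; and if some $x_i=z$, then $F(x_1,\ldots,x_n)=z=\max^n_{\preceq}(x_1,\ldots,x_n)$ by the annihilator property. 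Thus $F=\max^n_{\preceq}$, and Proposition~\ref{thm:ack} delivers the desired conclusion.

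The only nontrivial point is the inductive step, and specifically the verification that $F$ really does restrict to an operation on $([k]\setminus\{z\})^n$ with the expected preimage sequence; this is precisely where the annihilator property extracted from Proposition~\ref{prop:an} does the work, so no further subtlety is expected.
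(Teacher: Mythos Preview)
Your proposal is correct and follows essentially the same approach as the paper: both directions pivot on Proposition~\ref{thm:ack}, and the sufficiency is handled by induction on $k$, using Proposition~\ref{prop:an} to identify the element with maximal preimage as an annihilator and then peeling it off. The only cosmetic differences are that your necessity argument computes the preimage sizes of $\max^n_{\preceq}$ directly via Lemma~\ref{lem:easy} (the paper instead alludes to Proposition~\ref{prop:an}), and in the inductive step you extend $\preceq'$ by adjoining $z$ on top whereas the paper defines $\preceq$ globally from the preimage sizes at the outset; neither difference is substantive.
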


\begin{proof}
(Necessity) Since $G$ is quasitrivial, it is surjective and hence by Lemma \ref{lem:sym} it is symmetric. Thus, by Proposition \ref{thm:ack} there exists a total ordering $\preceq$ on $X$ such that $G(x,y) = \max^2_{\preceq}(x,y)$ for all $x,y \in [k]$. {{Hence $F=\max_{\preceq}^n$, which has an annihilator, and the proof of the necessity then follows by Proposition \ref{prop:an}.}}

(Sufficiency) We proceed by induction on $k$. The result clearly holds for $k=1$. Suppose that it holds for some $k\geq 1$ and let us show that it still holds for $k+1$. Assume that $F\colon [k+1]^n\to [k+1]$ is quasitrivial and that
\[
|F^{-1}|=(1,2^n-1,\ldots,(k+1)^n-k^n).
\]
Let $\preceq$ be the total ordering on $[k+1]$ defined by
\[
x\preceq y \ \text{ if and only if } \ |F^{-1}(x)|\leq|F^{-1}(y)|,
\]
and let $z=\max^{k+1}_{\preceq}(1,\ldots ,k+1)$. Clearly, $F'=F|_{([k+1]\setminus\{z\})^n}$ is quasitrivial and $|F'^{-1}|=(1,2^n-1,\ldots,k^n-(k-1)^n)$. By induction hypothesis we have that $F'=\max^n_{\preceq'}$, where $\preceq'$ is the restriction of $\preceq$ to $[k+1]\setminus\{z\}$. By Proposition \ref{prop:an}, $|F^{-1}[z]|=(k+1)^n-k^n$ and thus $F=\max^n _{\preceq}$.
\end{proof}

We can now state and prove the main result of this section.

\begin{theorem}\label{thm:sym}
Let $F\colon X^n\to X$ be an associative, quasitrivial, symmetric operation. The following assertions are equivalent.
\begin{enumerate}
  \item[(i)] $F$ is reducible to an associative and quasitrivial operation $G\colon X^2 \to X$.
  \item[(ii)] There exists a total ordering $\preceq$ on $X$ such that $F$ is $\preceq$-preserving.
  \item[(iii)] There exists a total ordering $\preceq$ on $X$ such that $F = max^n_{\preceq}$.
\end{enumerate}
Moreover, when $X = [k]$, each of the assertions $\textrm{(i)}-\textrm{(iii)}$ is equivalent to each of the following assertions.
\begin{enumerate}
  \item[(iv)] $F$ has exactly one neutral element.
  \item[(v)]  $|F^{-1}| = (1,2^n-1,\ldots,k^n-(k-1)^n)$.
\end{enumerate}
Furthermore, the total ordering $\preceq$ considered in assertions $\textrm{(ii)}$ and $\textrm{(iii)}$ is uniquely defined as follows:
\begin{equation}\label{eq:degsym}
x\preceq y \quad \mbox{if and only if} \quad |G^{-1}[x]|\leq |G^{-1}[y]|, \qquad x,y\in [k].
\end{equation}
Moreover, there are $k!$ operations satisfying any of the conditions $\textrm{(i)}-\textrm{(v)}$.
\end{theorem}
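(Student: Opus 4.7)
The plan is to prove Theorem \ref{thm:sym} by a short cycle of implications chaining results already established in the paper, first handling the general assertions $\mathrm{(i)}$--$\mathrm{(iii)}$ and then incorporating the finite-case assertions $\mathrm{(iv)}$ and $\mathrm{(v)}$, with the uniqueness of $\preceq$ and the count $k!$ following at the end.

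For the general equivalences, I would invoke Proposition \ref{thm:ack} for $\mathrm{(i)}\Leftrightarrow\mathrm{(iii)}$ (this is precisely Ackerman's characterization). The implication $\mathrm{(iii)}\Rightarrow\mathrm{(ii)}$ is immediate, since $\max^n_{\preceq}$ is $\preceq$-preserving by construction. For $\mathrm{(ii)}\Rightarrow\mathrm{(i)}$, I would first apply Remark \ref{rem:ND}: any $\preceq$-preserving operation has at most one neutral element, so combined with the standing hypotheses that $F$ is associative and quasitrivial, Corollary \ref{cor:need} yields reducibility to an associative and quasitrivial binary operation $G$, as required.

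Next, for the case $X=[k]$, the equivalence $\mathrm{(i)}\Leftrightarrow\mathrm{(v)}$ is exactly Proposition \ref{thm:deg}. For $\mathrm{(i)}\Leftrightarrow\mathrm{(iv)}$, I would combine two ingredients: Theorem \ref{thm:unired}, which says that an associative and quasitrivial $F$ admits a quasitrivial binary reduction if and only if it has at most one neutral element, and Proposition \ref{prop:sym}, which guarantees that a symmetric, associative and quasitrivial operation on $[k]$ has at least one neutral element. Together these force $F$ to have exactly one neutral element precisely when $\mathrm{(i)}$ holds, giving $\mathrm{(iv)}$.

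Finally, for the uniqueness of $\preceq$ and the enumeration, I would argue as follows. Assuming $\mathrm{(iii)}$, the element $x\in[k]$ of rank $i$ in $\preceq$ satisfies $|F^{-1}[x]|=i^n-(i-1)^n$, which is strictly increasing in $i$, so $\preceq$ is completely recovered from the preimage sizes; by Corollary \ref{cor:need} the reduction is $G(x,y)=F(x,(n-1)\Cdot y)$, and since $G=\max^2_{\preceq}$ by Proposition \ref{thm:ack}, a direct count gives $|G^{-1}[x]|=2i-1$, from which \eqref{eq:degsym} follows. The count $k!$ is then immediate: distinct total orderings $\preceq$ on $[k]$ yield distinct operations $\max^n_{\preceq}$, and by $\mathrm{(iii)}$ every operation satisfying the equivalent conditions arises in this way. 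I do not foresee a genuine obstacle here; the only delicate point is to ensure that the various orderings occurring throughout (from \eqref{eq:deg}, Proposition \ref{thm:ack}, and \eqref{eq:degsym}) consistently coincide with the one recovered from the preimage sizes.
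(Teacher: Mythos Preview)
Your proposal is correct and follows essentially the same route as the paper's proof: the same chain of implications is used, drawing on Proposition~\ref{thm:ack}, Remark~\ref{rem:ND}, Theorem~\ref{thm:unired}, Proposition~\ref{prop:sym}, and Proposition~\ref{thm:deg}. The only cosmetic differences are that for $\mathrm{(ii)}\Rightarrow\mathrm{(i)}$ you invoke Corollary~\ref{cor:need} directly (the paper instead reduces via Corollary~\ref{cor:main1} and argues by contradiction using Theorem~\ref{thm:unired} and Remark~\ref{rem:ND}), and for the uniqueness of $\preceq$ and the $k!$ count you give an explicit argument where the paper simply cites \cite[Theorem~3.3]{CouDevMar2}.
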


\begin{proof}
$\textrm{(i)} \Rightarrow \textrm{(iii)}$. This follows from Proposition \ref{thm:ack}.

$\textrm{(iii)} \Rightarrow \textrm{(ii)}$. Obvious.

$\textrm{(ii)} \Rightarrow \textrm{(i)}$. By Corollary \ref{cor:main1} we have that $F$ is reducible to an associative operation $G\colon X^2 \to X$. Suppose to the contrary that $G$ is not quasitrivial. From Theorem \ref{thm:unired} and Proposition \ref{prop:arity}, it then  follows that $F$ has two neutral elements $e_1,e_2\in X$, {{which contradicts Remark \ref{rem:ND}}}.

$\textrm{(i)} \Leftrightarrow \textrm{(v)}$. This follows from Proposition \ref{thm:deg}.

$\textrm{(i)} \Rightarrow \textrm{(iv)}$. This follows from Theorem \ref{thm:unired} and Proposition \ref{prop:sym}.

$\textrm{(iv)} \Rightarrow \textrm{(i)}$. This follows from Lemma \ref{lemma:Dud3} and Theorem \ref{thm:unired}.

The rest of the statement follows from \cite[Theorem 3.3]{CouDevMar2}.
\end{proof}

Now, let us illustrate Theorem \ref{thm:sym} for binary operations by means of their contour plots. In Figure~\ref{fig:3ab} (left),  we represent the contour plot of an operation $G\colon X^2\to X$ using the usual total ordering $\leq$ on $X=\{1,2,3,4\}$. In Figure~\ref{fig:3ab} (right) we represent the contour plot of $G$ using the total ordering $\preceq$ on $X$ defined by \eqref{eq:degsym}. We then observe that $G=\max_{\preceq}^2$, which shows by Theorem \ref{thm:sym} that $G$ is associative, quasitrivial, and symmetric.

\setlength{\unitlength}{3.5ex}
\begin{figure}[htbp]
\begin{center}
\begin{small}
\null\hspace{0.03\textwidth}
\begin{picture}(6,6)
\put(0.5,0.5){\vector(1,0){5}}\put(0.5,0.5){\vector(0,1){5}}
\multiput(1.5,0.45)(1,0){4}{\line(0,1){0.1}}%
\multiput(0.45,1.5)(0,1){4}{\line(1,0){0.1}}%
\put(1.5,0){\makebox(0,0){$1$}}\put(2.5,0){\makebox(0,0){$2$}}\put(3.5,0){\makebox(0,0){$3$}}
\put(4.5,0){\makebox(0,0){$4$}}
\put(0,1.5){\makebox(0,0){$1$}}\put(0,2.5){\makebox(0,0){$2$}}\put(0,3.5){\makebox(0,0){$3$}}
\put(0,4.5){\makebox(0,0){$4$}}
\multiput(1.5,1.5)(0,1){4}{\multiput(0,0)(1,0){4}{\circle*{0.2}}}
\drawline[1](1.5,4.5)(1.5,1.5)(4.5,1.5)\drawline[1](2.5,3.5)(2.5,2.5)(3.5,2.5)\drawline[1](2.5,4.5)(4.5,4.5)(4.5,2.5)
\end{picture}
\hspace{0.1\textwidth}
\begin{picture}(6,6)
\put(0.5,0.5){\vector(1,0){5}}\put(0.5,0.5){\vector(0,1){5}}
\multiput(1.5,0.45)(1,0){4}{\line(0,1){0.1}}%
\multiput(0.45,1.5)(0,1){4}{\line(1,0){0.1}}%
\put(1.5,0){\makebox(0,0){$3$}}\put(2.5,0){\makebox(0,0){$2$}}\put(3.5,0){\makebox(0,0){$4$}}
\put(4.5,0){\makebox(0,0){$1$}}
\put(0,1.5){\makebox(0,0){$3$}}\put(0,2.5){\makebox(0,0){$2$}}\put(0,3.5){\makebox(0,0){$4$}}
\put(0,4.5){\makebox(0,0){$1$}}
\multiput(1.5,1.5)(0,1){4}{\multiput(0,0)(1,0){4}{\circle*{0.2}}}
\drawline[1](1.5,2.5)(2.5,2.5)(2.5,1.5)\drawline[1](1.5,3.5)(3.5,3.5)(3.5,1.5)\drawline[1](1.5,4.5)(4.5,4.5)(4.5,1.5)
\end{picture}
\end{small}
\caption{An associative, quasitrivial, and symmetric binary operation $G$ on $X=\{1,2,3,4\}$ whose values on $(1,1), (2,2), (3,3)$ and $(4,4)$ are $1,2,3$ and $4$, respectively.}
\label{fig:3ab}
\end{center}
\end{figure}
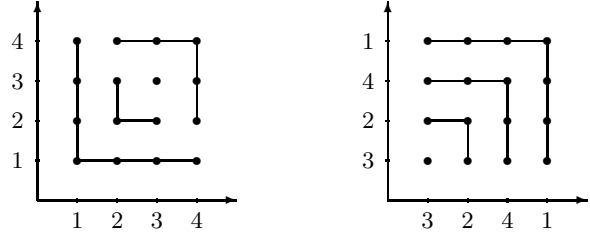

Based on this example, we illustrate a simple test to check whether an operation $F\colon [k]^n\to [k]$ is associative, quasitrivial, symmetric, and has exactly one neutral element. First,  construct the unique  weak ordering  $\precsim$ on $[k]$ from the preimage sequence $|F^{-1}|$, {{namely}},  $x\precsim y$ if $|F^{-1}[x]|\leq |F^{-1}[y]|$. Then, check if $\precsim$ is a total ordering and if $F$ is the maximum operation for $\precsim$.

We denote the class of associative, quasitrivial, symmetric operations $G\colon X^2 \to X$ that have a neutral element $e\in X$ by $QS_1^2(X)$. Also, we denote by $AS_1^2(X)$ the class of symmetric operations $G\colon X^2 \to X$ that belong to $A_1^2(X)$. It is not difficult to see that $QS_1^2(X)\subseteq AS_1^2(X)$. {{In fact, $G\in QS_1^2(X)$ if and only if $G\in AS_1^2(X)$ and $|{G}^{-1}[e]| = 1$, where $e$ is the neutral element for $G$.}}

For each integer $k\geq 2$, let $qs^n(k)$ denote the number of associative, quasitrivial, and symmetric $n$-ary operations on $[k]$. Also, denote by $as_1^2(k)$ the size of $AS_1^2([k])$. From Theorems \ref{thm:unired} and \ref{thm:sym} it follows that $qs^2(k) = |QS_1^2([k])| = k!$. Also, it is easy to check that $as_1^2(2)=4$. The remaining terms of the sequence are given in the following proposition.

\begin{proposition}\label{prop:ase}
For every integer $k\geq 3$, $as_1^2(k) = qs^2(k) + kqs^2(k-1) = 2k!$.
\end{proposition}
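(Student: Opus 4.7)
The plan is to mirror the counting carried out in Proposition \ref{prop:ae} while tracking symmetry throughout. Since $QS_1^2(X)\subseteq AS_1^2(X)$, I split
\[
AS_1^2([k]) \;=\; QS_1^2([k])\ \sqcup\ \bigl(AS_1^2([k])\setminus QS_1^2([k])\bigr)
\]
and enumerate the two pieces separately. The first has cardinality $qs^2(k)=k!$ by Theorem \ref{thm:sym} and the preceding discussion, so the whole problem reduces to showing that the second piece has cardinality $k\cdot qs^2(k-1) = k(k-1)! = k!$.

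To count $AS_1^2([k])\setminus QS_1^2([k])$, I invoke the structural description of Proposition \ref{prop:obv}: any $G\in A_1^2([k])\setminus Q_1^2([k])$ is uniquely determined by the (unique) element $x\in[k]$ with $G(x,x)=e_G$ together with the restriction $G'=G|_{([k]\setminus\{x\})^2}$, which must lie in $Q_1^2([k]\setminus\{x\})$ and must have neutral element $e_G$; all remaining entries of $G$ are forced by $G(x,e_G)=G(e_G,x)=x$ and $G(x,y)=G(y,x)=y$ for $y\in[k]\setminus\{x,e_G\}$. The forcing rules on the $x$-row and $x$-column are manifestly symmetric in their two arguments, so $G$ is symmetric if and only if $G'$ is. Hence the symmetric members of $A_1^2([k])\setminus Q_1^2([k])$ correspond bijectively to pairs $(x,G')$ with $x\in[k]$ and $G'\in QS_1^2([k]\setminus\{x\})$, yielding exactly $k\cdot qs^2(k-1) = k!$ operations.

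Combining the two counts gives
\[
as_1^2(k) \;=\; qs^2(k) + k\,qs^2(k-1) \;=\; k! + k(k-1)! \;=\; 2k!,
\]
which is the claim. The step I expect to require the most care is the converse direction of the bijection in the previous paragraph, namely that assembling an arbitrary $G'\in QS_1^2([k]\setminus\{x\})$ with the forced $x$-row and $x$-column always produces an associative (and hence $AS_1^2$-class) operation. This is the well-definedness fact implicit in Proposition \ref{prop:obv} and in the proof of Proposition \ref{prop:ae}; once it is settled, the enumeration itself is purely bookkeeping.
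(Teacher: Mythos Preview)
Your proof is correct and follows essentially the same approach as the paper's: both split $AS_1^2([k])$ into $QS_1^2([k])$ and its complement, then count the complement by choosing the unique non-idempotent element $x$ and observing that the restriction to $([k]\setminus\{x\})^2$ lies in $QS_1^2([k]\setminus\{x\})$. Your version is slightly more explicit about the bijection and the well-definedness of the inverse construction (which the paper, as in Proposition~\ref{prop:ae}, leaves implicit), but the argument is the same.
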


\begin{proof}
As observed $QS_1^2([k])\subseteq AS_1^2([k])$. So let us enumerate the operations in $ AS_1^2([k])\setminus QS_1^2([k])$. There are $k$ ways to choose the element $x\in [k]$ such that $G(x,x) = e$ and $G(x,y) = G(y,x) = y$ for all $y\in [k]\setminus \{x,e\}$. Moreover, the restriction of $G$ to $([k]\setminus\{x\})^2$ belongs to $QS_1^2([k]\setminus \{x\})$, and we have $qs^2(k-1)$ possible such restrictions. Thus $as_1^2(k)=qs^2(k) + kqs^2(k-1)$. By Theorems \ref{thm:unired} and \ref{thm:sym} it then follows that $as_1^2(k) = k! + k(k-1)! = 2k!$.
\end{proof}

For any integer $k\geq 2$ let $qs^n_1(k)$ denote the number of associative, quasitrivial, and symmetric $n$-ary operations that have exactly one neutral element on $[k]$. Also, let $qs^n_{2}(k)$ denote the number of associative, quasitrivial, and symmetric $n$-ary operations that have two neutral elements on $[k]$.

\begin{proposition}
For each integer $k\geq 2$, $qs^n_1(k) = qs^2(k) = k!$. Moreover,
$
qs^n_{2}(k) = \frac{k!}{2},
$
and
$
qs^n(k) = \frac{3k!}{2}.
$
\end{proposition}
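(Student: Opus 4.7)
The plan is to combine Theorem~\ref{thm:sym} (the characterization in the symmetric, quasitrivial case) with the $2$-to-$1$ correspondence between operations having two neutral elements and their binary reductions (Corollary~\ref{prop:countaqe}, Proposition~\ref{prop:unired2}), together with the enumeration from Proposition~\ref{prop:ase}.

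First, Theorem~\ref{thm:sym} identifies the associative, quasitrivial, symmetric operations on $[k]$ with exactly one neutral element as the $n$-ary maximum operations $\max^n_{\preceq}$, one per total ordering $\preceq$ on $[k]$. Since there are $k!$ such orderings, this yields $qs^n_1(k) = k!$, and the case $n=2$ gives $qs^2(k) = k!$ as well.

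Next, for $qs^n_2(k)$: by Proposition~\ref{prop:arity}(a) this count vanishes when $n$ is even, so I assume $n$ is odd. The plan is to set up a $2$-to-$1$ surjection $\Phi\colon AS_1^2([k])\setminus QS_1^2([k]) \to \{F \text{ counted by } qs^n_2(k)\}$, defined by $\Phi(G) = G_{n-1}$. For well-definedness, $\Phi(G)$ is associative and reducible to $G$ by construction, is quasitrivial by Proposition~\ref{prop:blue} (using $n$ odd), inherits symmetry from $G$, and by Corollary~\ref{prop:countaqe} has exactly two neutral elements: the neutral element $e_G$ of $G$, together with the element $x$ satisfying $G(x,x) = e_G$, which a short associativity computation shows to be neutral for $G_{n-1}$. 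For the fiber sizes: by Proposition~\ref{prop:unired2} and Corollary~\ref{prop:countaqe}, any such $F$ has exactly two binary reductions $G^{e_1}, G^{e_2}\in A_1^2([k])\setminus Q_1^2([k])$, and the identity $G^{e}(x,y) = F(x,(n-2)\Cdot e,y) = G^{e}(y,x)$ (by symmetry of $F$) places both in $AS_1^2([k])\setminus QS_1^2([k])$. Hence $2\cdot qs^n_2(k) = as_1^2(k) - qs^2(k) = 2k! - k! = k!$ by Proposition~\ref{prop:ase}, yielding $qs^n_2(k) = k!/2$.

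Finally, Proposition~\ref{prop:sym} forces every associative, quasitrivial, symmetric operation on $[k]$ to have at least one neutral element, while Proposition~\ref{prop:arity} bounds this by two. Summing, $qs^n(k) = qs^n_1(k) + qs^n_2(k) = k! + k!/2 = 3k!/2$ for $n$ odd. The main obstacle is the forward well-definedness of $\Phi$: verifying that $\Phi(G)$ acquires a \emph{second} neutral element beyond $e_G$, which requires a careful case analysis exploiting $G(x,x) = e_G$, the identity $G(x,y) = y$ for $y\notin\{x,e_G\}$, and the parity of $n$ (so that the $n-1$ copies of $x$ surrounding any argument can be paired off to collapse to $e_G$).
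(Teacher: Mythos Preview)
Your proof is correct and follows essentially the same route as the paper: both obtain $qs^n_1(k)=k!$ from Theorem~\ref{thm:sym}, both obtain $qs^n_2(k)=\tfrac{1}{2}(as_1^2(k)-qs^2(k))$ via the two-to-one correspondence of Corollary~\ref{prop:countaqe} and Proposition~\ref{prop:unired2} combined with Proposition~\ref{prop:ase}, and both sum using Propositions~\ref{prop:sym} and~\ref{prop:arity}. Your write-up is actually more explicit than the paper's in constructing the surjection $\Phi$ and in flagging the parity hypothesis; one small presentational point is that your citation of Corollary~\ref{prop:countaqe} for the well-definedness of $\Phi$ is not quite the right hook (that corollary needs two reductions as input), but you correctly supply the missing ingredient yourself via the direct parity computation showing $x$ is a second neutral element.
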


\begin{proof}
By Theorems \ref{thm:sym} and \ref{thm:unired} and Lemma \ref{lem:sym} we have that the number of associative, quasitrivial, and symmetric $n$-ary operations that have exactly one neutral element on $[k]$ is exactly the number of associative, quasitrivial, and symmetric binary operations on $[k]$. By Theorems \ref{thm:unired} and \ref{thm:sym} this number is given by $qs^2(k) = k!$. Also, by Corollary \ref{prop:countaqe}, Proposition \ref{prop:ase}, and Theorems \ref{thm:unired} and \ref{thm:sym}, we have that $q^n_{2}(k) = \frac{as_1^2(k)-qs^2(k)}{2} = \frac{k!}{2}$ and by Proposition \ref{prop:arity} we have that $q^n(k) = qs^n_1(k) + qs^n_{2}(k) = \frac{3k!}{2}$.
\end{proof}

\begin{remark}
Recall that an operation $F\colon X^n \to X$ is said to be \emph{bisymmetric} if
\[
F(F(\mathbf{r}_1),\ldots,F(\mathbf{r}_n)) ~=~ F(F(\mathbf{c}_1),\ldots,F(\mathbf{c}_n))
\]
for all $n\times n$ matrices $[\mathbf{c}_1 ~\cdots ~\mathbf{c}_n]=[\mathbf{r}_1 ~\cdots ~\mathbf{r}_n]^T\in X^{n\times n}$. In \cite[Corollary 4.9]{DevKiMar17} it was shown that associativity and bisymmetry are equivalent for operations $F\colon X^n \to X$ that are quasitrivial and symmetric. Thus, we can replace associativity with bisymmetry in Theorem \ref{thm:sym}.
\end{remark}

\section{Conclusion}

In this paper we proved that any quasitrivial $n$-ary semigroup is reducible to a semigroup. Furthermore, we showed that a quasitrivial $n$-ary semigroup is reducible to a unique quasitrivial semigroup if and only if it has at most one neutral element. Moreover, we characterized the class of quasitrivial (and symmetric) $n$-ary semigroups that have at most one neutral element. Finally, when the underlying set is finite, this work led to four new integer sequences in the Sloane's OEIS \cite{Slo}, namely, A308351, A308352,  A308354, and A308362.

Note however that there exist idempotent $n$-ary semigroups that are not reducible to a semigroup (for instance, consider the idempotent associative operation $F\colon \mathbb{R}^3 \to \mathbb{R}$ defined by $F(x,y,z) = x-y+z$ for all $x,y,z\in \mathbb{R}$). This naturally asks for necessary and sufficient conditions under which an idempotent $n$-ary semigroup is reducible to a semigroup. This and other related questions constitute topics for future research.

\section*{Acknowledgements}

Both authors would like to thank Jean-Luc Marichal and the anonymous referee for their  useful comments and insightful remarks that helped improving the current paper.

\end{document}